\DeclareMathSymbol{\leqslant}{\mathalpha}{AMSa}{"36} 
\DeclareMathSymbol{\geqslant}{\mathalpha}{AMSa}{"3E} 
\DeclareMathSymbol{\eset}{\mathalpha}{AMSb}{"3F}     
\newcommand{\cE}{\mathcal{E}}
\newcommand{\Om}{\Omega}
\newcommand{\om}{\omega}
\newcommand{\eps}{\varepsilon}
\newcommand{\ban}{\begin{align}}
\newcommand{\ean}{\end{align}}
\newcommand{\ba}{\begin{align*}}
\newcommand{\ea}{\end{align*}}
\newcommand{\be}{\begin{eqnarray*}}
\newcommand{\ee}{\mathrm{e}}
\newcommand{\ben}{\begin{eqnarray}}
\newcommand{\een}{\end{eqnarray}}
\theoremstyle{plain}
\newtheorem{theo}{Theorem}[section]
\newtheorem{lemma}[theo]{Lemma}
\newtheorem{propo}[theo]{Proposition}
\newtheorem{defi}[theo]{Definition}
\theoremstyle{definition}
\newcommand{\R}{{\mathbb R}}
\renewenvironment{proof}[1][] {\noindent{\bf Proof#1.} }{\hspace*{\fill}$\square$\medskip\par}
\begin{document}

\vglue20pt \centerline{\huge\bf Attractors and Expansion}

\medskip

\centerline{\huge\bf  for Brownian Flows}

 \medskip

\bigskip

\bigskip

\centerline{by}

\bigskip

\medskip

\centerline{{\Large  G.\ Dimitroff\footnotemark[1] and  M.\ Scheutzow\footnotemark[2]}}
\footnotetext[1]{Fraunhofer ITWM, Fraunhofer-Platz 1, D-67663 Kaiserslautern}

\footnotetext[2]{Institut f\"ur Mathematik, MA 7-5, Technische Universit\"at Berlin, Stra\ss e des 17. Juni 136, D-10623 Berlin }

\bigskip

\bigskip

\bigskip

{\leftskip=1truecm

\rightskip=1truecm

\baselineskip=15pt

\small

\noindent{\slshape\bfseries Summary.}  We show that a stochastic flow which is generated by a stochastic differential 
equation on $\R^d$ with 
bounded volatility has a random attractor provided that the drift component in the 
direction towards the origin is larger than a certain strictly positive constant $\beta$ outside a large ball. 
Using a similar approach, we provide a lower bound for the linear growth rate of the inner radius of the image of a large 
ball under a stochastic flow in case the drift component in the 
direction away from the origin is larger than a certain strictly positive constant $\beta$ outside a large ball. 
To prove the main result we use {\em chaining techniques} in order to control the growth of the diameter of subsets of the 
state space under the flow.
\bigskip

\noindent{\slshape\bfseries Keywords.} Stochastic flow, stochastic differential equation, attractor, chaining.

\bigskip

\noindent {\slshape\bfseries 2000 Mathematics Subject
Classification:} 37H10, 60G60, 60H10.

}


\newcommand{\oq}{{\langle}}
\newcommand{\ve}{{\varepsilon}}
\newcommand{\cq}{{\rangle}_t}
\newcommand{\dd}{\,{\mathrm{d}}}
\newcommand{\Dd}{{\mathrm{D}}}
\newcommand{\cd}{{\cdot}}
\newcommand{\nix}{{\varnothing}}
\newcommand{\N}{{\mathbb N}}
\newcommand{\Z}{{\mathbb Z}}
\newcommand{\Q}{{\mathbb Q}}
\newcommand{\E}{{\mathbb E}}
\renewcommand{\P}{{\mathbb P}}
\newcommand{\F}{{\cal F}}
\newcommand{\C}{{\mathbb C}}
\newcommand{\K}{{\mathbb K}}
\newcommand{\B}{{\cal B}}
\newcommand{\G}{{\cal G}}
\newcommand{\D}{{\cal D}}
\newcommand{\Zz}{{\cal Z}}
\newcommand{\Ll}{{\cal L}}
\newcommand{\A}{{\cal A}}
\newcommand{\Po}{{\cal P}}
\newcommand{\Sy}{{\cal S}}
\newcommand{\cZ}{{\cal Z}}
\newcommand{\M}{{\cal M}}
\newcommand{\Nn}{{\cal N}}
\newcommand{\p}{{\mathbf P}}
\newcommand{\X}{{\mathbb X}}
\newcommand{\interior}[1]{{\mathaccent 23 #1}}
\newcommand{\bem}{\begin{em}}
\newcommand{\eem}{\end{em}}
\def\eins{{\mathchoice {1\mskip-4mu\mathrm l} 
{1\mskip-4mu\mathrm l}
{1\mskip-4.5mu\mathrm l} {1\mskip-5mu\mathrm l}}}
\newcommand{\olim}[1]{\begin{array}{c} ~\\[-1.4ex] \overline{\lim} \\[-1.35ex]
                       {\scriptstyle #1}\end{array}}
\newcommand{\plim}[2]{\begin{array}{c} #1\\[-1.175ex] \longrightarrow \\[-1.2ex] 
                       {\scriptstyle #2}\end{array}} 

\newcommand{\ulim}[1]{\begin{array}{c} ~\\[-1.175ex] \underline{\lim} \\[-1.2ex]
                       {\scriptstyle #1}\end{array}}

\renewcommand{\theequation}{\thesection.\arabic{equation}}

\section{Introduction}\label{intro}
It has been suggested that stochastic flows can be used as a model for studying the spread of passive tracers 
within a turbulent fluid. The individual particles (one-point motions) perform diffusions while the motions of 
adjacent particles are correlated in order to form a stochastic flow of homeomorphisms. Infinitesimally the 
flow is governed by a stochastic field of continuous semimartingales $F(t,x)$ via a stochastic differential 
equation (SDE) of {\em Kunita-type}
\begin{align*}
\phi_{s,t}(x)=x+\int\limits_s^t F(\dd u, \phi_{s,u}(x)).
\end{align*}
We will be interested in questions concerning the asymptotics of these flows. We aim at conditions 
for the existence of a {\em random} ({\em pullback}) {\em attractor}. 

One might expect that an SDE with bounded and Lipschitz diffusion coefficient and Lipschitz continuous 
drift $b$ whose component in the direction of the origin is positive and bounded away from zero outside a large ball 
should have a random attractor, i.e.\ large balls should contract under the solution (semi-)flow generated by 
the SDE and 
converge (in an appropriate sense) towards a stationary process taking values in the space of compact subsets of the 
state space $\R^d$ of the SDE. Interestingly, this need not be the case when $d \ge 2$ (\cite{S08B} contains a counterexample). 
It is clear that under the conditions above, the drift is sufficiently strong to push individual trajectories towards 
the origin when they are far away but it may happen that the drift is not sufficiently strong to push {\em all} 
trajectories starting far away towards the origin. In fact it may happen that a non-empty set of initial conditions 
(depending on the future 
of the driving noise process) will move towards infinity against the drift (with linear speed). Our main result will show 
however that there exists a number $\beta_0$ which is strictly positive in case $d \ge 2$ and zero for $d=1$ and which 
depends on the parameters of the SDE such that if the component of the drift $b$ in the direction of the origin is larger than 
$\beta_0$ outside a large ball, then {\em all} trajectories are attracted towards the origin and a random attractor (in the 
pullback sense) exists. 

Our main result, Theorem \ref{main}, contains a second statement which is in some sense dual to the first: if the 
component $\beta$ of the drift $b$ in the direction {\em away} from the origin is larger than $\beta_0$, then large balls are 
very likely to {\em expand} in the sense that each compact set will eventually be contained in (or swallowed by) 
the image of the ball under the flow meaning that the probability that this does not happen decreases to zero as the radius 
of the initial ball goes to infinity. In fact the results are even stronger: the speed of expansion is (at least) {\em linear} 
with rate at least $\beta -\beta_0$. Similarly, we show that the speed of attraction in the first result is also at least linear.

To prove the results, we provide bounds on the one-point motion of the solutions (these are fairly standard) and also 
estimates for the two-point motion (which are not as standard) which are needed to apply {\em chaining methods} in order to 
control the growth of sets (often small balls) under the action of the solution flow. We will be more explicit about our 
strategy in Section \ref{Proofs}. 

The paper is organized as follows: in the next section we provide the set-up and some basic 
definitions. In Section \ref{Mainresult} we state the main result. Section \ref{Proofs}  contains the proof. In the 
Appendix, we collect some auxiliary results.

\section{Set-up and preliminaries}

Let $(\Omega,\F, (\F_t)_{t \ge 0}, \P)$ be a filtered probability space satisfying the usual conditions. On this probability 
space we define a jointly continuous martingale field $M(t,x,\omega): \R_+\times \R^d\times \Omega \to \R^d$, 
satisfying $M(0,x)=0$ for all $x \in \R^d$. We assume that its joint quadratic 
variation is of the form
$$
\langle M(\cdot,x),M(\cdot,y)\rangle_t=t\cdot a(x,y)\\
$$
for a (deterministic) function $a :  \R^d\times \R^d \to \R^{d\times d}$. Note that this implies that 
$(t,x)\mapsto M(t,x)$ is a Gaussian field and $t \mapsto M(t,x)$ is a Brownian motion (up to a linear transformation) for each 
$x$. Further, we let $b :  \R^d \to \R^{d}$ be a (drift) vector field.
We consider a stochastic flow generated via a stochastic differential equation (SDE) of {\em Kunita type}, see \cite{Ku90}
\begin{align}\label{gen_sde}
\phi_{s,t}(x)=x+\int_s^t b(\phi_{s,u}(x))\dd u + \int_s^t M(\dd u, \phi_{s,u}(x)),\,0 \le s \le t < \infty.
\end{align}
We abbreviate $\A(x,y):=a(x,x)-a(x,y)-a(y,x)+a(y,y)$. Observe that 
$$
\A(x,y)=\frac{\dd}{\dd t} \langle M(\cdot,x)-M(\cdot,y)\rangle_t.
$$ We impose the following Lipschitz-type condition: \\[0.3cm]
\textbf{Condition (A1)} \textit{There are constants $\lambda \ge 0$ and $\sigma_L>0$ such that for all $x,y \in \R^d$ we have  
\begin{enumerate}
\item[\textnormal{(i)}]\,$||\A(x,y)||\le \sigma_L^2\, |x-y|^2$
\item[\textnormal{(ii)}]\,$b$ satisfies a local Lipschitz condition and 
$(x-y)\cdot (b(x)-b(y)) + \frac{d-1}{2}\,\sigma_L^2\,  |x-y|^2 \le \lambda |x-y|^2$.
\end{enumerate}}
Here $|\cdot|$ denotes the Euclidean norm in $\R^d$ and $||\cdot||$ the  operator norm for $d\times d$ matrices. \\
It is essentially well-known, that under assumptions (A1), the  SDE (\ref{gen_sde}) has a unique 
solution for each $x$ and $s$. Indeed this follows from a straightforward modification of Theorem 3.4.6 in 
\cite{Ku90} (this theorem requires a linear growth condition of the form $|b(x)\cdot x| \le c (1+|x|^2)$  but 
really only uses an estimate of the form $b(x) \cdot x \le c (1+|x|^2)$ which is an easy consequence of Condition (A1)).
Further, Theorem 4.7.1 in \cite{Ku90} shows that equation \eqref{gen_sde} generates a stochastic flow of local homeomorphisms 
(defined in \cite{Ku90}, p.177). Theorem \ref{ball} together with Lemma \ref{two} shows that (a modification of) this flow is 
actually {\em strongly complete} (or {\em strictly complete}) 
i.~e.\ 
$\phi: \left\{ 0\le s \le t < \infty \right\} \times \R^d \times \Om \to \R^d$ satisfies for each $\om \in \Om$
\begin{enumerate}
\item[\textnormal{(i)}] $\phi_{s,s}(\om)=\mathrm{id_{\R^d}}, \;\; s \ge 0$,
\item[\textnormal{(ii)}]  $(s,t,x) \mapsto \phi_{s,t}(x,\om)$ is continuous,
\item[\textnormal{(iii)}] for each $s,t$, the map $x \mapsto \phi_{s,t}(x,\om)$ is one-to-one, 
\item[\textnormal{(iv)}] $\phi_{s,u}(\om)=\phi_{t,u}(\om)\circ \phi_{s,t}(\om) \;\; u \ge t \ge s \ge 0$.
\end{enumerate} 
Additionally, $\phi$ has stationary and independent increments and is therefore called a {\em (time-homogeneous) Brownian flow}.
$\phi$ can be uniquely (in law) extended to $-\infty < s \le t < \infty$ in such a way that stationarity, independent increments and 
properties (i), (ii), (iii) and (iv) are preserved. In this case we will call $\phi$ the {\em flow generated by the SDE} \eqref{gen_sde}. 
Note that -- in general --  $\phi_{s,t}(\om)$ is not onto (not even 
in the deterministic case). Assumption (A1) allows for example for a drift $b(x)=-|x|^2\,x$ whose solution flow is not onto.  

Observe that flows generated via SDEs driven by finitely many independent Brownian motions 
\begin{align*}
\phi_{s,t}(x)=x+ \int_s^t V_0(\phi_{s,u}(x))\dd u + \sum_{i=1}^m \int_s^t V_i(\phi_{s,u}(x))\dd W_i(u),
\end{align*}
where $V_i:\R^d\to \R^d$ are globally Lipschitz vector fields, satisfy Assumption (A1) by  
taking $b(x)=V_0(x)$ and $M(t,x)=\sum_{i=1}^m V_i(x)W_i(t)$.\\

We will now formulate additional conditions which we will use in our main result.\\[0.3cm]
\textbf{Condition (A2)} \textit{ The diffusion coefficient is uniformly bounded, i.e.\ there exists $\sigma_B >0$ such that  
$||a(x,x)||\le \sigma_B^2$ for all $x\in \R^d$.}\\

For a given value of $\beta \in \R$, the following conditions require that the component of the drift in the radial direction 
is asymptotically bounded from above respectively below by $\beta$.\\

\noindent \textbf{Condition (A3$^{\beta}$)} 
$$
\limsup_{|x| \to \infty}\frac{x}{|x|}\cdot b(x) \le \beta.
$$

\noindent \textbf{Condition (A3$_{\beta}$)} 
$$
\liminf_{|x| \to \infty}\frac{x}{|x|}\cdot b(x) \ge \beta.
$$

\subsection{Attractors}

In this section we give a brief introduction to the concept of a random attractor. 
Let $(E,d)$ be a Polish (i.e.~a separable complete metric) space and let 
$\cE$ be its Borel $\sigma$-algebra. 
\begin{defi}
\begin{enumerate}
\item[\textnormal{(a)}]  $\left ( \Omega, \F^0, \P, \left (\theta_t \right)_{t \in \R}\right) $ is called a
\bem metric dynamical system (MDS)\eem, if  $\left ( \Omega, \F^0, \P\right)$ is a probability space and 
the family of mappings $\left\{\theta_t \colon \Omega \to \Omega \,| \, t \in \R \right \}$ satisfies
\begin{itemize}
\item[\textnormal{(i)}] the mapping $(\omega,t) \mapsto \theta_t(\omega)$ is $(\F^0 \otimes \B(\R),\F^0)$- measurable,
\item[\textnormal{(ii)}]$\theta_0=\textnormal{id}_{\Omega}$,
\item[\textnormal{(iii)}] \bem(flow-property) \eem  $\theta_{s+t}=\theta_s\circ \theta_t$ for all $s,t \in \R$,
\item[\textnormal{(iv)}] for each $t \in \R$, $\theta_t$ preserves the measure $\P$.
\end{itemize}
\item[\textnormal{(b)}] A \bem random dynamical system (RDS)\eem on the 
space $(E,\cE)$ over the MDS $\left ( \Omega, \F^0,\P,\left ( \theta_t \right)\right)$ with time $\R^+$ is a mapping 
\begin{align*}
\varphi \,\colon \, [0,\infty) \times   E\times\Omega \to E , \hspace{0.5cm} (t,x,\omega)\mapsto \varphi(t,x,\omega)
\end{align*}
with the following properties:
\begin{itemize}
\item[\textnormal{(i)}] \bem Measurability\eem: $\varphi$ is 
$\left ( \B([0,\infty))\otimes \cE \otimes\F^0 ,\cE\right ) $-measurable.
\item[\textnormal{(ii)}] \bem(Perfect) Cocycle property\eem: 
\begin{align*}
\begin{array}{ll}
\varphi(0,\omega)=\textnormal{id}_{E} & \text{for all } \omega \in \Omega\,,\\[0.3cm]
\varphi(t+s,\omega)=\varphi(t,\theta_s\omega)\circ \varphi(s,\omega)& \text{for all } \omega \in 
\Omega \text{ and all } s,t \ge 0.
\end{array}
\end{align*}
\end{itemize}
The RDS $\varphi$ is called \bem (jointly) continuous \eem  if additionally 
\begin{itemize}
\item[\textnormal{(iii)}] the mapping $(t,x)\mapsto \varphi(t,x,\omega)$ is continuous for all $\omega \in \Omega$.
\end{itemize}
\end{enumerate}
\end{defi}
The following definition is due to Crauel and Flandoli, see \cite{CF94}. 
\begin{defi}\label{attractor}
Let $\varphi$ be an RDS on $E$ over  the MDS $(\Omega,\F^0,\left(\theta_t\right)_{t\in \R},\P)$. 
The random set $A(\omega)$ is called an \bem  attractor \eem for $\varphi$ if
\begin{enumerate}
\item[\textnormal{(a)}] $A(\omega)$ is a random element in the metric space of nonempty compact subsets of $E$ equipped 
with the Hausdorff distance. 
\item[\textnormal{(b)}] $A$ is strictly $\varphi$-invariant, that is, for each $t \ge 0$, there exists a set $\Omega_t$ 
of full measure such that 
$\varphi(t,\omega)(A(\omega))=A(\theta_t\omega)$ for  all $\omega \in \Omega_t$.
\item[\textnormal{(c)}] $\lim_{t\to \infty}\sup_{x \in B} d(\varphi(t,\theta_{-t}\omega)(x), A(\omega))=0$  
almost surely for all bounded closed sets $B \subseteq E$.
\end{enumerate}
\end{defi}

\noindent {\bf Remark.}
Attractors as in the previous definition are often called {\em pullback attractors}. If almost sure 
convergence in part (c) of the definition is replaced by convergence in probability, then $A$ is called a 
{\em weak attractor}, see \cite{Gu449}. For a comparison of 
different concepts of a random attractor for one-dimensional diffusions, see \cite{S02}.\\

We will need the following criterion for the existence of an attractor (a much more general result can be found in \cite{CDS08}). 
For simplicity we formulate it only in case 
$E=\R^d$ equipped with the Euclidean metric. Let $\mathrm{B}_r$ be the closed ball with center 0 and radius $r$. 

\begin{propo}\label{criterion}
Let $\varphi: [0,\infty)\times \R^d\times \Omega \to \R^d $  be a continuous cocycle over 
the metric dynamical system $\big( \Omega,\F^0,\P,(\theta_t)_{t \in \R}\big)$. Then the following are equivalent:
\begin{itemize}
\item[\textnormal{(i)}] $\varphi$ has an attractor.
\item[\textnormal{(ii)}]For every  $R>0$
\begin{align*}
\lim_{r \to \infty} \P\big\{\mathrm{B}_R \subseteq \cup_{s=0}^{\infty} \cap_{t \ge s} \varphi^{-1}(t,\mathrm{B}_r,\theta_{-t}\omega) \big\} =1.
\end{align*}
\end{itemize}
\end{propo}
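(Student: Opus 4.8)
The plan is to prove this by showing each direction separately, with the implication (i)$\Rightarrow$(ii) being essentially a restatement of the attractor's pullback convergence property, and (ii)$\Rightarrow$(i) requiring us to construct the attractor explicitly as the $\Omega$-limit set of bounded balls. First I would fix notation: the event inside the probability in (ii) says precisely that the ball $\mathrm{B}_R$ is eventually absorbed, i.e.\ there exists a (random) time $s$ after which $\varphi(t,\theta_{-t}\omega)(\mathrm{B}_R)\subseteq \mathrm{B}_r$ for all $t\ge s$. So (ii) states that for every $R$, the probability that $\mathrm{B}_R$ is eventually absorbed by $\mathrm{B}_r$ tends to $1$ as $r\to\infty$. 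The natural candidate for the attractor is the omega-limit set $A(\omega):=\bigcap_{r>0}\overline{\bigcup_{s\ge 0}\bigcap_{t\ge s}\varphi(t,\theta_{-t}\omega)(\mathrm{B}_r)}$, or more carefully, the closure of the union over all bounded sets $B$ of their pullback omega-limit sets $\Omega_B(\omega)=\bigcap_{s\ge0}\overline{\bigcup_{t\ge s}\varphi(t,\theta_{-t}\omega)(B)}$.

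For (i)$\Rightarrow$(ii): given an attractor $A$, part (c) of Definition \ref{attractor} gives $\sup_{x\in\mathrm{B}_R}d(\varphi(t,\theta_{-t}\omega)(x),A(\omega))\to 0$ a.s. Since $A(\omega)$ is compact, it is contained in some ball $\mathrm{B}_{\rho(\omega)}$ with $\rho(\omega)<\infty$ a.s.; hence for $\omega$ in a set of probability at least $1-\eps$ we can bound $\rho(\omega)\le r_0$ for $r_0$ large, and then for $r>r_0$ eventually $\varphi(t,\theta_{-t}\omega)(\mathrm{B}_R)\subseteq \mathrm{B}_r$. Letting $\eps\to0$ gives (ii).

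For (ii)$\Rightarrow$(i): this is the substantive direction. I would proceed as follows. First, using (ii) I would show that for each $R$ the omega-limit set $\Omega_{\mathrm{B}_R}(\omega)$ is nonempty and compact for a.e.\ $\omega$: absorption of $\mathrm{B}_R$ into $\mathrm{B}_r$ (on an event of probability $\to1$) together with continuity of $\varphi$ forces the decreasing intersection of nonempty compact sets $\bigcap_{s\ge0}\overline{\bigcup_{t\ge s}\varphi(t,\theta_{-t}\omega)(\mathrm{B}_R)}$ to be nonempty and compact — here I would invoke a standard ``compact absorbing set'' argument (this is where the generalization in \cite{CDS08} lives). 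Next I would set $A(\omega):=\overline{\bigcup_{R\in\N}\Omega_{\mathrm{B}_R}(\omega)}$ and verify: (a) measurability and — crucially — that $A(\omega)$ is itself compact (not merely closed); compactness requires showing the nested omega-limit sets do not escape to infinity, which again comes from (ii) applied with larger and larger $R$ against a fixed absorbing radius. (b) Strict invariance $\varphi(t,\omega)(A(\omega))=A(\theta_t\omega)$, which follows from the cocycle property by the usual manipulation $\varphi(t,\omega)\Omega_B(\omega)=\Omega_B(\theta_t\omega)$ using that $\varphi(t,\theta_{-t}\theta_s\omega)=\varphi(s+t,\theta_{-(s+t)}\theta_t\omega)$ together with joint continuity of $\varphi(t,\cdot)$ to pass the map through the closure. (c) Pullback attraction of every bounded closed set $B$: since $B\subseteq\mathrm{B}_R$ for some $R$, and $\Omega_{\mathrm{B}_R}(\omega)\subseteq A(\omega)$, attraction of $B$ reduces to attraction of $\mathrm{B}_R$ to its own omega-limit set, which is the standard fact that $\mathrm{dist}(\varphi(t,\theta_{-t}\omega)(\mathrm{B}_R),\Omega_{\mathrm{B}_R}(\omega))\to0$ whenever $\mathrm{B}_R$ has a compact pullback absorbing set — and (ii) supplies exactly that absorbing set (namely some $\mathrm{B}_r$).

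The main obstacle is establishing compactness of $A(\omega)$ and the nonemptiness/compactness of the individual omega-limit sets from the relatively weak hypothesis (ii), which only asserts absorption with probability tending to $1$ rather than almost-sure absorption into a single fixed ball. The workaround is to fix a target probability $1-\eps$, choose a radius $r=r(\eps)$ absorbing $\mathrm{B}_R$ with that probability, run the compact-absorbing-set machinery on that event, and then take a countable sequence $\eps_k\downarrow0$ and intersect the corresponding full-measure-in-the-limit events; a diagonal argument over $R\in\N$ and $k$ then yields a single full-measure set on which everything holds. Once the compact pullback absorbing structure is in place, parts (b) and (c) are routine applications of the cocycle property and joint continuity, so I would keep those brief.
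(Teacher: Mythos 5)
Your direction (i)$\Rightarrow$(ii) matches the paper's proof essentially verbatim: use compactness of $A(\omega)$ to place it inside $\mathrm{B}_{r-1}$ with probability at least $1-\varepsilon$, then invoke part (c) of Definition \ref{attractor}. For the converse direction the paper gives no self-contained argument at all: it states only that (ii) implies the existence of a random absorbing set and refers to \cite{CF94} and \cite{CDS08} for details. Your proposal is therefore strictly more detailed on this direction, and the omega-limit-set construction you outline, $A(\omega)=\overline{\bigcup_{R\in\N}\Omega_{\mathrm{B}_R}(\omega)}$, together with verification of compactness, strict invariance, and pullback attraction, is indeed the route taken in those references. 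One caveat: you describe the compactness of $A(\omega)$ as coming from ``(ii) applied with larger and larger $R$ against a fixed absorbing radius''. But (ii) provides no fixed absorbing radius --- for each $R$ the radius $r$ works only on an event of probability $1-\varepsilon$, and a.s.\ the radius $r(\omega,R)$ absorbing $\mathrm{B}_R$ may grow with $R$. Showing that the increasing union $\bigcup_R\Omega_{\mathrm{B}_R}(\omega)$ nevertheless stays bounded (equivalently, that $\Omega_{\mathrm{B}_R}(\omega)$ eventually stabilizes in $R$) is precisely the delicate step isolated in \cite{CDS08}, and it needs more than the $\varepsilon$-diagonalization you sketch at the end, which handles the probability-versus-a.s.\ issue but not the $R$-dependence of the absorbing radius. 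Since the paper itself delegates this to the references, your outline is consistent with the paper's intent, but that particular step is less routine than the sketch suggests.
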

\begin{proof}
If $\varphi$ has an attractor $A$, then for each $\varepsilon>0$ there exists $r>0$ such that $A$ is contained in $\mathrm{B}_{r-1}$ 
with probability at least $1-\varepsilon$. Part (c) of Definition \ref{attractor} therefore implies 
$$
\P\big\{\mathrm{B}_R \subseteq \cup_{s=0}^{\infty} \cap_{t \ge s} \varphi^{-1}(t,\mathrm{B}_r,\theta_{-t}\omega) \big\} \ge 1-\varepsilon
$$
and therefore (ii) follows.

Conversely, (ii) implies the existence of a random {\em absorbing} set which in turn is sufficient for the existence of an attractor 
(for details, see \cite{CF94} or \cite{CDS08}).
\end{proof}

We will show the existence of an attractor for a class of flows $\phi$ satisfying conditions (A1) and (A2).
Since attractors are defined for RDS rather than flows, we have to make sure that $\phi$ generates 
an RDS in an appropriate sense. This is done in the following proposition which is proved in \cite{AS95}. 
Strictly speaking, the set--up in \cite{AS95} is formulated using slightly stronger smoothness assumptions 
on the coefficients of the SDE than in our set--up due to the fact that the authors of \cite{AS95} use the 
Stratonovich rather than It\^o's integral. It is easy to see however that in the It\^o set--up no 
additional smoothness is required for the following proposition to hold.

\begin{propo}\label{sf_rds} Let $\phi$ be the stochastic flow generated via  
SDE (\ref{gen_sde}) satisfying condition $\mathrm{(A1)}$. Then there is an $\R^d-$valued continuous cocycle $\varphi$ 
over some MDS $(\tilde \Omega,\tilde \F,\tilde \P,( \theta_t)_{t\in \R})$ such that 
the distributions of $\big\{\phi_{s,t}(.,.) :  -\infty <s \le t < \infty \big\}$ and 
$\big\{\varphi(t-s,., \theta_s(.)) : -\infty <s \le t < \infty \big\}$ coincide.
\end{propo}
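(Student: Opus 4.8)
The plan is to transport the flow onto a canonical noise space carrying a measurable, measure-preserving shift, to recognise the solution of~\eqref{gen_sde} there as a \emph{crude} cocycle (the cocycle identity holding, for each fixed pair of times, only almost surely), and then to upgrade it to a genuine RDS by the perfection theorem for cocycles generated by stochastic differential equations proved in \cite{AS95}. Since $M$ is jointly continuous with $M(0,\cdot)\equiv0$ and joint quadratic variation $\langle M(\cdot,x),M(\cdot,y)\rangle_t=t\,a(x,y)$, it is a centred Gaussian field which is Brownian in time with stationary and independent increments and whose paths lie in $C(\R_+,C(\R^d,\R^d))$. Passing to two-sided time, I would take $\tilde\Omega:=\{\omega\in C(\R,C(\R^d,\R^d)):\omega(0)\equiv0\}$, $\tilde\P$ the law of the two-sidedly extended field $M$, $\tilde\F$ the $\tilde\P$-completion of the Borel $\sigma$-algebra of $\tilde\Omega$, and $\theta_t$ the increment shift $(\theta_t\omega)(s):=\omega(s+t)-\omega(t)$. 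Then $(\tilde\Omega,\tilde\F,\tilde\P,(\theta_t)_{t\in\R})$ is an MDS: measurability of $(\omega,t)\mapsto\theta_t\omega$, the identity $\theta_0=\mathrm{id}_{\tilde\Omega}$, and the flow property $\theta_{s+t}=\theta_s\circ\theta_t$ are elementary, and $\theta_t$ preserves $\tilde\P$ precisely because $M$ has stationary independent increments.

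I would then realise $M$ on $\tilde\Omega$ as the coordinate field, solve~\eqref{gen_sde} with starting time $s=0$, and set $\Phi(t,x,\omega):=\phi_{0,t}(x,\omega)$. By Theorem~4.7.1 in \cite{Ku90}, together with the strong completeness supplied by Theorem~\ref{ball} and Lemma~\ref{two}, $\Phi$ is globally defined and has a jointly continuous, hence jointly measurable, version. Since $b$ and the law of $M$ are time-homogeneous and $\phi_{s,t}$ depends only on the increments of $M$ after time $s$, pathwise uniqueness yields $\phi_{s,t}(x,\omega)=\Phi(t-s,x,\theta_s\omega)$ for $\tilde\P$-a.e.\ $\omega$; inserting this into the flow property~(iv) of $\phi$ gives
\[
\Phi(0,\cdot,\omega)=\mathrm{id}_{\R^d},\qquad \Phi(t+s,x,\omega)=\Phi\big(t,\Phi(s,x,\omega),\theta_s\omega\big),
\]
valid, for each fixed pair $(s,t)$, for $\tilde\P$-a.e.\ $\omega$. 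This is a crude cocycle.

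The remaining point --- and the genuine obstacle --- is that the exceptional null set just mentioned depends on $(s,t)$, whereas an RDS requires the cocycle identity to hold for \emph{all} $\omega\in\tilde\Omega$ and \emph{all} $s,t\ge0$ simultaneously. Here I would invoke the perfection theorem of \cite{AS95} (which, as remarked above, carries over from the Stratonovich to the It\^o set--up without extra smoothness): exploiting the SDE structure and time-homogeneity, it produces a $\big(\B([0,\infty))\otimes\B(\R^d)\otimes\tilde\F,\B(\R^d)\big)$-measurable map $\varphi$, indistinguishable from $\Phi$, satisfying the perfect cocycle property for all $\omega$ and all $s,t\ge0$; joint continuity of $(t,x)\mapsto\varphi(t,x,\omega)$ is inherited because $\varphi$ agrees with the continuous field $\Phi$ off a single null set, so $\varphi$ is a continuous RDS over the MDS constructed above. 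Finally, indistinguishability of $\varphi$ and $\Phi$, together with $\phi_{s,t}(x,\omega)=\Phi(t-s,x,\theta_s\omega)$ $\tilde\P$-a.s., shows that the families $\{\phi_{s,t}(\cdot,\cdot):-\infty<s\le t<\infty\}$ and $\{\varphi(t-s,\cdot,\theta_s(\cdot)):-\infty<s\le t<\infty\}$ have the same finite-dimensional distributions --- these being determined by~\eqref{gen_sde} and the stationary independent increments of $M$ --- and hence the same law, which is the asserted coincidence.
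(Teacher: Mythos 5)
Your proposal is correct and follows essentially the same route as the paper, which delegates the proof entirely to \cite{AS95}: the construction you outline (canonical noise space with the increment shift as MDS, the solution as a crude cocycle via pathwise uniqueness and stationary independent increments, then perfection) is precisely the argument of that reference, including the paper's remark that no extra smoothness is needed in the It\^o set--up. Note only that the substantive step, the perfection theorem, is imported from \cite{AS95} in your write-up just as in the paper, so nothing is proved beyond what the paper itself asserts.
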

 From now on we shall identify the flow $\phi$ with the associated RDS $\varphi$ in view of the previous proposition. 
In particular, we will check condition (ii) in Proposition \ref{criterion} with $\varphi^{-1}(t,\mathrm{B}_r,\theta_{-t}\omega)$ 
replaced by $\phi_{-t,0}^{-1}(\mathrm{B}_r,\omega)$
and therefore there will be no need to refer to random dynamical systems in the rest of the paper.

\section{Main Result}\label{Mainresult}
In the following we denote a closed ball in $\R^d$ with center $x$ and radius $r$ by $\mathrm{B}(x,r)$ and define 
$\mathrm{B}_r:=\mathrm{B}(0,r)$ as before.

\begin{theo}\label{main}
Assume that $\mathrm{(A1)}$ and $\mathrm{(A2)}$ are satisfied, and define
$$
\beta_0:=\sqrt{2} \sigma_B \left(\lambda (d-1)+ \sigma_L^2 (d-1)^2+
\sqrt{ \sigma_L^4 (d-1)^4 +2\lambda \sigma_L^2 (d-1)^3} \right)^{1/2}.$$
Let $\phi$ be the flow associated to \eqref{gen_sde}.
\begin{itemize}
\item[\textnormal{a)}] If $\phi$ satisfies $\mathrm{(A3^{\beta})}$ for some $\beta<-\beta_0$, then for each $0 \le 
\gamma < -\beta-\beta_0$, we have
$$
\lim_{r \to \infty} \P\left\{\mathrm{B}_{\gamma t} \subseteq \phi_ {-t,0}^{-1}(\mathrm{B}_r)\mbox{ for all } t \ge 0\right\} =1. 
$$
In particular, $\phi$ has a random attractor.
\item[\textnormal{b)}] If $\phi$ satisfies $\mathrm{(A3_{\beta})}$ for some $\beta>\beta_0$, then for each $0 \le 
\gamma < \beta-\beta_0$, we have
$$
\lim_{r \to \infty} \P\left\{\mathrm{B}_{\gamma t} \subseteq \phi_{0,t}(\mathrm{B}_r) \mbox{ for all } t \ge 0\right\} =1. 
$$
\end{itemize}
\end{theo}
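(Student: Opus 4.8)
The plan is to prove parts a) and b) by a common mechanism: combine a one-point estimate (the drift pushes a single trajectory toward/away from the origin at linear speed, with exponential tail control on the deviation) with a two-point/chaining estimate that controls how much the diameter of a small ball can grow under the flow, so that a whole ball's worth of initial conditions can be shown to behave like the one-point motion, uniformly. For part a), I would fix $R>0$ and $\gamma<-\beta-\beta_0$, and aim to show $\P\{\mathrm{B}_{\gamma t}\subseteq \phi_{-t,0}^{-1}(\mathrm{B}_r)\text{ for all }t\ge 0\}\to 1$ as $r\to\infty$; the event $\mathrm{B}_{\gamma t}\subseteq \phi_{-t,0}^{-1}(\mathrm{B}_r)$ is the same as $\phi_{-t,0}(\mathrm{B}_{\gamma t})\subseteq \mathrm{B}_r$, so by stationarity of increments it suffices to control $\sup_{|x|\le \gamma t}|\phi_{0,t}(x)|$ and show it stays below $r$ for all $t$ with high probability when $r$ is large. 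Because the relevant starting balls $\mathrm{B}_{\gamma t}$ grow with $t$, the argument has to be run scale-by-scale: one shows that on the good event a point starting in $\mathrm{B}_{\gamma t}$ is, after running the flow, already inside a fixed large ball $\mathrm{B}_{r_0}$ (where the radial drift condition $\mathrm{(A3^\beta)}$ kicks in), and from then on the supermartingale coming from the drift keeps it trapped, with Borel–Cantelli over a discretization in time handling the "for all $t$" part. Part b) is dual: $\mathrm{B}_{\gamma t}\subseteq \phi_{0,t}(\mathrm{B}_r)$ will follow by showing that the image of the boundary sphere $\partial \mathrm{B}_r$ under $\phi_{0,t}$ stays outside $\mathrm{B}_{\gamma t}$ — since the flow consists of homeomorphisms (and $\phi_{0,t}(0)$ stays comparatively near the origin), a sphere that never crosses into $\mathrm{B}_{\gamma t}$ must enclose it — and here $\mathrm{(A3_\beta)}$ gives a submartingale pushing $|\phi_{0,t}(x)|$ outward at rate roughly $\beta$.

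The one-point input I would extract from the Appendix / standard estimates: under (A1), (A2), for $|x|$ large, $d|\phi_{0,t}(x)|$ has drift bounded above (resp. below) by $\beta+o(1) + \tfrac{(d-1)\sigma_B^2}{2|\phi_{0,t}(x)|}$ (the Itô correction for the radial process), plus a martingale part with quadratic variation rate $\le \sigma_B^2$. The constant $\beta_0$ is exactly the threshold obtained by optimizing the exponential-moment/large-deviation bound for this radial diffusion against the chaining cost — more precisely, $\beta_0$ is chosen so that $e^{\theta r}$-type exponential supermartingale estimates on $|\phi_{0,t}(x)|$ (with a carefully chosen $\theta$) beat the entropy of covering the initial ball $\mathrm{B}_{\gamma t}$ by $e^{\text{const}\cdot r}$ many small balls of radius $1$. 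So the skeleton is: (1) cover $\mathrm{B}_{\gamma t}$ by unit balls centered on a lattice; (2) for each center, the one-point estimate controls $\phi_{0,t}(\text{center})$; (3) the two-point estimate (Lemma \ref{two}, Theorem \ref{ball}) controls $\sup_{y\in \text{unit ball}}|\phi_{0,t}(y)-\phi_{0,t}(\text{center})|$, i.e.\ the diameter growth of each small ball, with exponential tails; (4) union bound over the $\exp(\text{const}\cdot\gamma t)$ lattice points and a Borel–Cantelli over integer times $t=n$; (5) interpolate to real $t$ using continuity and a crude modulus-of-continuity bound on $\phi$.

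The hard part, and the reason the paper emphasizes "chaining techniques," will be step (3): controlling $\sup_{|y-z|\le 1}|\phi_{0,t}(y)-\phi_{0,t}(z)|$ uniformly over all $t\ge 0$ and with a tail good enough to survive the union bound over exponentially many lattice points. A single two-point estimate of the form $\E|\phi_{0,t}(y)-\phi_{0,t}(z)|^p$ is not enough; one needs a chaining/Kolmogorov-type argument inside each small ball, iterated dyadically, to upgrade pointwise two-point moment bounds to a supremum bound over the ball, and one must keep the constants uniform in $t$ — which is where Condition (A1)(i),(ii) (the one-sided Lipschitz bound with the crucial $\tfrac{d-1}{2}\sigma_L^2|x-y|^2$ term) does the work, since it forces $\E|\phi_{0,t}(y)-\phi_{0,t}(z)|^p$ to grow at most like $e^{c_p t}|y-z|^p$ with $c_p$ controllable, and hence the expansion over a ball of radius $1$ is $e^{c_p t}$, which is then dominated by the contraction (resp. the gap between outward speed and $\gamma$) coming from the radial drift once $\beta<-\beta_0$ (resp. $\beta>\beta_0$). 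Assembling these with the right choice of exponents so that the radial decay rate exceeds $\gamma$ plus the lattice-entropy rate plus the diameter-growth rate is the quantitative heart of the proof; the definition of $\beta_0$ is precisely the outcome of that optimization.

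\begin{proof}[of Theorem \ref{main}, sketch]
See the detailed argument in Section \ref{Proofs}.
\end{proof}
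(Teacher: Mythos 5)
Your high-level strategy --- one-point radial estimates combined with a Kolmogorov/chaining bound on the diameter growth of small balls, a union bound, and Borel--Cantelli --- is indeed the paper's strategy, and your topological observation for part b) (a sphere that never enters $\mathrm B_{\gamma t}$ has image whose interior contains $\mathrm B_{\gamma t}$, since the flow consists of homeomorphisms) is used implicitly there. But there are two places where the sketch as written does not close.

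The first is the covering scale. You cover by \emph{unit} balls and then ask for a supremum bound on $|\phi_{0,t}(y)-\phi_{0,t}(z)|$ over $|y-z|\le1$; by Lemma~\ref{two} this quantity behaves like $\ee^{\lambda t+\sigma_L W_t^*}$, which is unbounded in $t$ and swamps the one-point estimate for the centre, so no tail parameter fixes it. (Also, a unit lattice in $\mathrm B_{\gamma t}$ has only polynomially many points, which is inconsistent with the ``$\exp(\mathrm{const}\cdot\gamma t)$ lattice points'' of your step (4).) What actually works is to cover only the boundary sphere $\partial\mathrm B_R$ (sufficient by the homeomorphism property, and gaining a factor $d-1$ instead of $d$ in the entropy) by $N\sim\ee^{\Gamma(d-1)T}$ balls of \emph{exponentially small} radius $\ee^{-\Gamma T}$: then Theorem~\ref{ball} bounds the probability that any such ball reaches diameter~$1$ by roughly $\ee^{-(\Gamma-\lambda)^2T/(2\sigma_L^2)}$, and the balance $(d-1)\Gamma=\frac{1}{2\sigma_L^2}(\Gamma-\lambda)^2$ determines $\Gamma_0$ with $(d-1)\Gamma_0=\beta_0^2/(2\sigma_B^2)$. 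So $\beta_0$ comes from optimising the covering radius against the Lemma~\ref{two} diameter growth, not from an exponential-moment optimisation on the radial one-point diffusion as you describe.

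The second gap is specific to part a) and is what the paper flags as ``not entirely straightforward.'' Your step (5) proposes to ``interpolate to real $t$ using continuity and a crude modulus-of-continuity bound on $\phi$,'' but under $(\mathrm{A3}^\beta)$ you only control the radial speed \emph{away} from the origin; the solution can move inward arbitrarily fast, so a small space-time ball near the sphere can have uncontrollable diameter and a symmetric space-time chaining of $(t,x)\mapsto\phi_{s,t}(x)$ fails. The paper replaces it by a \emph{one-sided} chaining estimate (Lemma~\ref{chain}) on $X_t=\sup_{|x|=\bar R}\bigl(|\phi_{t,1}(x)|-\bar R\bigr)^+$, using only the upward increments $X_s-X_t$ for $s<t$, which \emph{can} be controlled via $(\mathrm{A3}^\beta)$ and Propositions~\ref{oneptcond}, \ref{oneptcondmodified} and \ref{coro}. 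Part b) is structurally easier precisely because Proposition~\ref{oneptcond2} gives a clean bound on the probability of ever re-entering a ball; no analogue is available in part a), and without the one-sided chaining device the ``for all $t\ge0$'' uniformity in your outline has a genuine hole.
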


\noindent{\bf Remark.}

The same number $\beta_0$ as in the Theorem also appears in upper bounds for the linear growth rate of the diameter of the 
image of a bounded set under a flow: assume (for simplicity) that $\tilde \phi$ is a flow with $b=0$ satisfying (A1) and (A2). 
Then Theorem 2.3 together with Corollary 2.7 and Proposition 2.8 in \cite{S08} show that 
$$
\limsup_{T \to \infty} \left( \sup_{t \in [0,T]} \sup_{x \in \mathrm{B}_1} \frac{1}{T} |\tilde \phi_{0,t}(x)| \right) \le \beta_0, \mbox{ a.s.} 
$$
It seems plausible that adding a drift $b$ to the flow $\tilde \phi$  which satisfies $\mathrm{(A3^{\beta})}$ for some negative 
$\beta$ will reduce the 
linear expansion rate from $\beta_0$ to $\beta_0 + \beta$. In particular, one may expect that linear expansion stops completely as soon 
as $\beta_0 + \beta$ is negative. Part a) of Theorem \ref{main} shows that this is indeed true: in fact we get linear 
contraction of large balls with rate at least $-\beta_0 - \beta$. For further results concerning upper and lower bounds for the 
growth rate of the image of a bounded set under a flow we refer the reader to \cite{CSS99,CSS00,LS01,CS02,SS02,LS03}.
\cite{S08B} contains an example of a flow in the plane which satisfies (A1), (A2) and (A$3^{\beta}$) for some $\beta<0$ 
for which no attractor exists (not even a weak one) which shows that Theorem \ref{main} becomes wrong if $\beta_0$ 
is replaced by 0 when $d \ge 2$. 

Let us briefly consider the case $d=1$. In this case, part a) of Theorem \ref{main} says that an attractor exists whenever 
(A3$^{\beta}$) holds for some $\beta<0$. In fact, one can say more: if the Markov process generated by the SDE admits an invariant 
probability measure which is ergodic in the sense that all transition probabilities converge to it weakly (and for this to 
be true condition (A3$^{\beta}$) does not need to hold), then the associated RDS automatically admits a {\em weak} attractor 
(for this and more general results on {\em monotone} RDS, see \cite{CS04}).  

Note that if $\phi$ is a flow which satisfies the conclusion of part a) of the theorem, then the inverse flow (i.e.~the flow 
run backwards in time) satisfies the conclusion of b) and vice versa (at least if $\phi$ is onto). Therefore, one could just prove 
one of the two statements and then prove the remaining one via time reversal. Unfortunately, the assumptions in both parts do not 
transform accordingly due to the Stratonovich correction term except for cases in which the correction term vanishes 
(which happens for example in case the driving field $M$ is {\em isotropic}).

\section{Proofs}\label{Proofs}

Let us briefly explain the idea of the proof of part b) of Theorem \ref{main} (we will explain the necessary changes 
for part a) later): we will divide the positive time axis into increasingly long intervals $[T_i,T_{i+1}]$ ($T_0=0$) 
and let $R_i$ be an increasing sequence of positive reals. We will provide an upper bound for the probability $q_i$ 
that the image of $\mathrm{B}_{R_i}$ under $\phi_{T_i,T_{i+1}}$ does {\em not} contain $\mathrm{B}_{R_{i+1}}$.
We will show that the $q_i$ are summable in case the $R_i$ and $T_i$ are chosen 
appropriately and then apply a Borel-Cantelli argument.  
This is not quite enough to prove the result: we have to make sure that we can choose the $R_i$ to grow 
sufficiently quickly and we have to ensure that in between successive $T_i$'s, the image of $\mathrm{B}_{R_i}$ 
contains a slightly smaller ball for {\em all} $t \in [T_i,T_{i+1}]$ with high probability.

In order to estimate the probability that the image of $\mathrm{B}_{R_i}$ under $\phi_{T_i,T_{i+1}}$ does not 
contain $\mathrm{B}_{R_{i+1}}$, we will cover the boundary of $\mathrm{B}_{R_i}$ with a large number $N$ of small balls 
with the same radius. We first provide an upper bound for the probability that a single point $x$ with norm $R_i$ will 
be mapped to a point with norm at most $R_{i+1} +1$ under $\phi_{T_i,T_{i+1}}$. This probability will typically be 
very small because the drift tends to push the trajectory away from the origin. We tune $N$ (and the radii of the balls) 
such that both the probability that at least one of the centers of the $N$ balls moves away too slowly
 and the probability that any of the small balls attains a diameter of size 1 before time $T_{i+1}$ are small (i.e.~summable over 
$i$). The required estimates for the growth of the diameter of a small ball under a flow are provided in the 
appendix. 


We start with a  well-known lemma and then proceed with estimates on the one-point motion. We will often write 
$\phi_t$ instead of $\phi_{0,t}$.

\begin{lemma}\label{comp_lemma_2} 
Let $\left (W_t \right )_{t \ge 0}$ be a standard Brownian motion.
Let $W_t^*:=\sup_{s\le t}W_s $ be its running maximum. 
Then for arbitrary $c \ge 0$ and $t >0$ the following bounds hold:
\begin{align*}
\P\big\{W_t\ge c\big\}\le \frac{1}{2} \ee^{-\frac{c^2}{2t}} \hspace{1cm}\text{ and }\hspace{1cm} 
\P \big\{ W_t^*\ge c\big\}\le \ee^{-\frac{c^2}{2t}}\,.
\end{align*}
\end{lemma}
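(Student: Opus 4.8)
The plan is to treat the two bounds separately: reduce the first to a sharp Gaussian tail estimate via scaling, and then obtain the second from it by the reflection principle for Brownian motion.

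First I would normalize. Since $t\mapsto W_t$ is a Brownian motion, $W_t$ has the $N(0,t)$ law, so with $Z$ a standard normal variable and $x:=c/\sqrt t\ge 0$ one has $\P\{W_t\ge c\}=\P\{Z\ge x\}$; it thus suffices to prove $\P\{Z\ge x\}\le\frac12\ee^{-x^2/2}$ for all $x\ge 0$. The crude Chernoff bound $\P\{Z\ge x\}\le\inf_{\lambda\ge 0}\ee^{-\lambda x}\,\E\,\ee^{\lambda Z}=\ee^{-x^2/2}$ is off by exactly the factor $1/2$, so a slightly finer argument is needed. I would substitute $u=x+v$ to write
\[
\P\{Z\ge x\}=\frac1{\sqrt{2\pi}}\int_x^\infty \ee^{-u^2/2}\dd u=\frac1{\sqrt{2\pi}}\,\ee^{-x^2/2}\int_0^\infty \ee^{-xv-v^2/2}\dd v .
\]
Since $x,v\ge 0$ we have $\ee^{-xv}\le 1$, and $\int_0^\infty \ee^{-v^2/2}\dd v=\sqrt{\pi/2}$, so the right-hand side is at most $\frac1{\sqrt{2\pi}}\sqrt{\pi/2}\,\ee^{-x^2/2}=\frac12\ee^{-x^2/2}$. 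This gives the first bound, the borderline case $c=0$ reading $\tfrac12\le\tfrac12$.

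For the second bound I would invoke the reflection principle: for $c>0$,
\[
\P\{W_t^*\ge c\}=2\,\P\{W_t> c\}=2\,\P\{W_t\ge c\},
\]
the last equality because $W_t$ has a density (so $\P\{W_t=c\}=0$); for $c=0$ both sides equal $1$. Combining this with the estimate already proved yields $\P\{W_t^*\ge c\}\le 2\cdot\tfrac12\ee^{-c^2/(2t)}=\ee^{-c^2/(2t)}$, as claimed.

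I do not anticipate any genuine obstacle. The only point requiring care is recovering the constant $1/2$ in the Gaussian tail — handled by the substitution $u=x+v$ together with $\ee^{-xv}\le 1$ — and quoting the reflection principle in the precise form $\P\{W_t^*\ge c\}=2\P\{W_t\ge c\}$, where the continuity of the law of $W_t$ makes the boundary term vanish.
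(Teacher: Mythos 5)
Your proof is correct. The paper states this lemma without proof, labelling it ``well-known,'' so there is no in-text argument to compare against; your route --- the sharp Gaussian tail bound $\P\{Z\ge x\}\le\frac12\ee^{-x^2/2}$ obtained from the substitution $u=x+v$ and $\ee^{-xv}\le 1$, followed by the reflection principle $\P\{W_t^*\ge c\}=2\P\{W_t\ge c\}$ --- is the standard one and fills in exactly what the authors took for granted, including the boundary case $c=0$.
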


%
\begin{propo}\label{oneptcond}
Let $\phi$ be a stochastic flow satisfying conditions $\mathrm{(A1)}$ and $\mathrm{(A2)}$. 
Let $1 \le \bar R < R$, $S>\bar R$, $T>0$ and $\beta \in \R$. 
\begin{itemize}
\item[\textnormal{a)}] If $\phi$ satisfies $\mathrm{(A3^{\beta})}$, then for each $|x|=R$, we have 
$$
\P\Big\{|\phi_{0T}(x)|\ge S, \inf_{0 \le t \le T} |\phi_{0t}(x)|\ge\bar R  \Big\} \le 
\exp\Big\{-\frac 12 \Big(\Big(-\frac{R-S}{\sigma_B \sqrt{T}} - \frac{\beta^*(\bar R)}{\sigma_B}\sqrt{T}\Big)
^+\Big)^2\Big\},
$$
where $\beta^*(\bar R):=\sup_{|y|\ge \bar R} \{y\cdot b(y)/|y|\}+ (d-1)\sigma_B^2/(2\bar R)$.
\item[\textnormal{b)}] If $\phi$ satisfies $\mathrm{(A3_{\beta})}$, then for each $|x|=S$, we have 
$$
\P\Big\{|\phi_{0T}(x)|\le R, \inf_{0 \le t \le T} |\phi_{0t}(x)|\ge \bar R    \Big\} \le 
\exp\Big\{-\frac 12 \Big(\Big(\frac{\beta_*(\bar R)}{\sigma_B}\sqrt{T} - \frac{R-S}{\sigma_B \sqrt{T}}\Big)
^+\Big)^2\Big\},
$$
where $\beta_*(\bar R):=\inf_{|y|\ge \bar R} \{y\cdot b(y)/|y|\}$.
\end{itemize}
\end{propo}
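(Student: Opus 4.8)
The plan is to reduce the $d$-dimensional radial motion $|\phi_{0t}(x)|$ to a one-dimensional comparison problem via It\^o's formula, and then apply the Brownian-motion tail bound of Lemma~\ref{comp_lemma_2}. Write $\rho_t := |\phi_{0t}(x)|$. On the event that $\rho_t \ge \bar R$ for all $t \in [0,T]$ (which is where we work), $\rho$ stays away from the origin, so $x \mapsto |x|$ is smooth along the path and It\^o's formula gives
\begin{align*}
\dd \rho_t = \frac{\phi_{0t}(x)}{\rho_t}\cdot b(\phi_{0t}(x))\,\dd t + \frac{1}{\rho_t}\,\phi_{0t}(x)\cdot M(\dd t,\phi_{0t}(x)) + \frac{1}{2\rho_t}\Big(\operatorname{tr} a(\phi_{0t}(x),\phi_{0t}(x)) - \frac{\phi_{0t}(x)^\top a(\phi_{0t}(x),\phi_{0t}(x))\phi_{0t}(x)}{\rho_t^2}\Big)\,\dd t.
\end{align*}
The martingale part is a continuous local martingale whose quadratic variation has density $\rho_t^{-2}\,\phi_{0t}(x)^\top a(\phi_{0t}(x),\phi_{0t}(x))\phi_{0t}(x) \le \|a(\phi_{0t}(x),\phi_{0t}(x))\| \le \sigma_B^2$ by (A2); so it can be written as $\int_0^t \eta_s\,\dd W_s$ for a standard Brownian motion $W$ (after the usual time-change / DDS representation) with $|\eta_s|\le\sigma_B$. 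The second It\^o correction term is bounded above by $\frac{d-1}{2\rho_t}\sigma_B^2 \le \frac{(d-1)\sigma_B^2}{2\bar R}$ on our event, and the drift term is $\le \sup_{|y|\ge\bar R}\{y\cdot b(y)/|y|\}$ there. Hence, on $\{\inf_{[0,T]}\rho_t\ge\bar R\}$, we have the pathwise bound $\rho_T - \rho_0 \le \beta^*(\bar R)\,T + \int_0^T \eta_s\,\dd W_s$, with $\rho_0 = R$.

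**Key steps, in order.** (1) Justify the It\^o expansion of $|\phi_{0t}(x)|$ on the (relatively open in time) set where the path stays outside $\mathrm{B}_{\bar R}$; a clean way is to apply It\^o to a smooth function agreeing with $|\cdot|$ outside $\mathrm{B}_{\bar R/2}$ and note that the two processes agree up to the first exit time below $\bar R$. (2) Bound the three terms as above using (A2) and (A3$^\beta$) — here I should be slightly careful that (A3$^\beta$) is an asymptotic ($\limsup$) statement, whereas $\beta^*(\bar R)$ uses the genuine supremum over $|y|\ge\bar R$, so no issue: $\beta^*(\bar R)$ dominates the drift term on the whole region $\{|y|\ge\bar R\}$. (3) Use the bound on the quadratic variation to dominate the martingale: if $N_t := \int_0^t\eta_s\,\dd W_s$ then $\langle N\rangle_t \le \sigma_B^2 t$, so $N_t = \sigma_B B_{\langle N\rangle_t/\sigma_B^2}$ in law for a Brownian motion $B$, and in particular $\sup_{t\le T} N_t$ is stochastically dominated by $\sigma_B\sup_{s\le T} B_s = \sigma_B B_T^*$. (4) On the event in question, $S \le \rho_T \le R + \beta^*(\bar R)T + N_T \le R + \beta^*(\bar R)T + \sup_{t\le T}N_t$, so the event is contained in $\{\sup_{t\le T} N_t \ge S - R - \beta^*(\bar R)T\}$, i.e. in $\{\sigma_B B_T^* \ge S - R - \beta^*(\bar R)T\}$. (5) Apply the running-maximum bound from Lemma~\ref{comp_lemma_2}: if the right-hand side $S - R - \beta^*(\bar R)T$ is nonnegative then the probability is at most $\exp\{-(S-R-\beta^*(\bar R)T)^2/(2\sigma_B^2 T)\}$, which is exactly the claimed bound after factoring $\sigma_B\sqrt T$ out of the square and inserting the $(\cdot)^+$; if it is negative the stated bound is $\ge 1$ and there is nothing to prove.

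**Part b)** is the mirror image: now $\rho_0 = S$ and we want a lower bound on $\rho_T$ to fail. On $\{\inf_{[0,T]}\rho_t\ge\bar R\}$ the drift term is $\ge \beta_*(\bar R) := \inf_{|y|\ge\bar R}\{y\cdot b(y)/|y|\}$, and the It\^o correction term $\frac{1}{2\rho_t}(\operatorname{tr} a - \rho_t^{-2}\phi^\top a\phi)$ is \emph{nonnegative} (it is $\tfrac12\rho_t^{-1}$ times the variance-type quantity $\operatorname{tr} a - \rho_t^{-2}\phi^\top a\phi \ge 0$), so it only helps and can simply be dropped. Thus $\rho_T - S \ge \beta_*(\bar R)T + N_T$ on that event, whence $\{\rho_T\le R\}\cap\{\inf\rho\ge\bar R\} \subseteq \{-N_T \ge \beta_*(\bar R)T - (R - S)\} \subseteq \{\sigma_B B_T^* \ge \beta_*(\bar R)T - (R-S)\}$ using $-N \overset{d}{=} N$ and the same domination, and Lemma~\ref{comp_lemma_2} finishes it.

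**Main obstacle.** The only genuinely delicate point is step (1): making the It\^o formula for the non-smooth function $|\cdot|$ rigorous on the random time interval where the path avoids the origin, and in particular handling the martingale field $M(\dd t,\cdot)$ (a Kunita-type integral, not a finite sum of Brownian integrals) so that the quadratic variation of the radial martingale really is $\int_0^t \rho_s^{-2}\phi_{0s}(x)^\top a(\phi_{0s}(x),\phi_{0s}(x))\phi_{0s}(x)\,\dd s$. This follows from the defining property $\langle M(\cdot,x),M(\cdot,y)\rangle_t = t\,a(x,y)$ together with the standard rules for quadratic variation of stochastic integrals against a martingale field, but it should be stated carefully. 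Everything after that is elementary estimation plus Lemma~\ref{comp_lemma_2}.
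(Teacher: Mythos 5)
Your proposal is correct and follows essentially the same route as the paper: It\^o's formula applied to a smoothed radial coordinate, bounding the drift and trace terms on the region $\{|y|\ge\bar R\}$ to obtain $\beta^*(\bar R)$ (respectively $\beta_*(\bar R)$, where you correctly observe the It\^o correction is nonnegative and can be dropped), the DDS time-change to dominate the radial martingale's running maximum by $\sigma_B W_T^*$, and finally the Gaussian tail estimate of Lemma~\ref{comp_lemma_2}. The only cosmetic difference is that the paper introduces the smooth truncation $h$ of $|\cdot|$ at the outset and writes the martingale directly as $\sigma_B W_{\zeta(t)}$ rather than your intermediate $\int\eta_s\,\dd W_s$ formulation, but the argument is the same.
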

\begin{proof}
We first show a). Let $h$ be a smooth function from $[0,\infty)$ to $[0,\infty)$ such that $h(y)=y$ for $y \ge 1$, 
$0<h'(y)  \le 1$ for all $y>0$ and $h'(0)=0$ and   
define $\rho_t(x)=h(|\phi_t(x)|)$. Applying It\^o's formula, we get
\begin{align*}
\dd \rho_t(x)=\dd N_t + f(\phi_t(x))\dd t,
\end{align*}
where 
\begin{eqnarray*}
N_t&=& \sum_{i=1}^d \int_0^t h'(|\phi_s(x)|)\frac{\phi^i_s(x)}{|\phi_s(x)|} M^i(\dd s,\phi_s(x)) 
\,\,\,\,\,\,\,\,\text{ and, on $ \{|x|\ge \bar R\} $}, \\
f(x)&=& \frac{x}{|x|}\cdot b(x) + \frac{1}{2|x|}\mathrm{Tr}\,a(x,x)-\frac{1}{2|x|^3}x^Ta(x,x)x \le 
 \frac{x}{|x|}\cdot b(x) + \frac{d-1}{2|x|}\sigma_B^2\\
&\le& \sup_{|y|\ge \Bar R}\frac{y}{|y|}\cdot b(y) + \frac{d-1}{2\bar R}\sigma_B^2 =\beta^*(\bar R).
\end{eqnarray*}
For the quadratic variation of $N$, we have the following bound:
\begin{align*}
\langle N \rangle_t \le \int\limits_0^t \frac{1}{|\phi_s(x)|^2}\phi_s^T(x)a(\phi_s(x),\phi_s(x))\phi_s(x)\,\dd s\le 
\sigma_B^2 t.
\end{align*}
The continuous local martingale  $N$  can be represented (possibly on an 
enriched probability space) in the form  $N_t=\sigma_B W_{\zeta(t)}$, 
where $W$ is a standard Brownian motion and the family of stopping times $\zeta(s)$ satisfies 
$\zeta(s):= \langle \frac{1}{\sigma_B}N \rangle_s \le s $. Let $\tau:=\inf\{t>0:\rho_t(x)<\bar R\}$. 
For $|x|=R$, we get (using an upper index $^*$ to denote the running maximum as before)
\begin{align*}
&\P\Big(\{\rho_T(x)\ge S\}\cap \{\tau \ge T\} \Big)\le \P\big\{|x|+ N^*_{T}+\beta^*(\bar R) T \ge S \big\} 
= \P\big\{ N^*_T \ge S-R-\beta^*(\bar R) T  \big\}\\ 
&\le \P\Big\{ W_1^* \ge -\frac{R-S}{\sigma_B\sqrt{T}}-\frac{\sqrt T}{\sigma_B}
\beta^*(\bar R)  \Big\} \le \exp \Big\{-\frac 12 \Big(\Big(-\frac{R-S}{\sigma_B\sqrt{T}}-\frac{\sqrt T}{\sigma_B}
\beta^*(\bar R)\Big)^+\Big)^2 \Big\},
\end{align*}
where we used Lemma \ref{comp_lemma_2} in the last step. This proves part a).\\

The proof of part b) is analogous to that of a): just interchange $R$ and $S$ and estimate $f$ from below 
by $\beta_*(\bar R)$ on the set $\{|x|\ge \bar R\}$.
\end{proof}

We continue with the proof of part b) of Theorem \ref{main} which is slightly easier than that of part a).

\begin{propo}\label{oneptcond2}
Let $\phi$ be a stochastic flow satisfying conditions $\mathrm{(A1)}$, $\mathrm{(A2)}$ and $\mathrm{(A3_{\beta})}$ 
for some $\beta \in \R$.  
Let $1 \le \bar R<S$, and let
$\beta_*(\bar R)$ be defined as in Proposition \ref{oneptcond}. Then for each $|x|=S$, we have 
$$
\P\left\{ \inf_{t \ge 0}|\phi_{0t}(x)|\le \bar R \right\} \le 
\exp\Big\{-2(S-\bar R)\beta_*(\bar R)\frac{1}{\sigma_B^2}\Big\}. 
$$
\end{propo}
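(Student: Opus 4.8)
The plan is to reuse the radial It\^o decomposition from the proof of Proposition~\ref{oneptcond} and then run a one-dimensional ``gambler's ruin with drift'' argument via an exponential supermartingale tuned to annihilate the drift. Throughout we may assume $\beta_*(\bar R)>0$, since otherwise the asserted right-hand side is $\ge 1$ and there is nothing to show.

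First I would take the same smooth $h\colon[0,\infty)\to[0,\infty)$ as in Proposition~\ref{oneptcond} ($h(y)=y$ for $y\ge1$, $0<h'\le1$, $h'(0)=0$) and set $\rho_t:=h(|\phi_{0t}(x)|)$, so that $\dd\rho_t=\dd N_t+f(\phi_{0t}(x))\dd t$ with $\langle N\rangle_t\le\sigma_B^2t$. The only modification is that on $\{|y|\ge\bar R\}$ I now need the \emph{lower} bound
\[
f(y)=\frac{y}{|y|}\cdot b(y)+\frac{1}{2|y|}\mathrm{Tr}\,a(y,y)-\frac{1}{2|y|^3}y^Ta(y,y)y\;\ge\;\frac{y}{|y|}\cdot b(y)\;\ge\;\beta_*(\bar R),
\]
where the first inequality holds because $a(y,y)$ is positive semi-definite, so $y^Ta(y,y)y\le|y|^2\|a(y,y)\|\le|y|^2\,\mathrm{Tr}\,a(y,y)$; this is the mirror image of the upper estimate in Proposition~\ref{oneptcond} (and is why the $\mathrm{Tr}\,a$ term does not enter $\beta_*$). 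Since $\bar R\ge1$, the event $\{\rho_t<\bar R\}$ coincides with $\{|\phi_{0t}(x)|<\bar R\}$, and $\rho_0=h(S)=S$.

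Now fix $r\in(\bar R,S)$, let $\tau:=\inf\{t\ge0:\rho_t<r\}$ (so $\rho_\tau=r$ on $\{\tau<\infty\}$ by continuity), put $\theta:=2\beta_*(\bar R)/\sigma_B^2>0$ and $Z_t:=\exp\{-\theta\rho_{t\wedge\tau}\}$. It\^o's formula shows that the finite-variation part of $Z$ has density at most $Z_t\big(-\theta\beta_*(\bar R)+\tfrac{\theta^2}{2}\sigma_B^2\big)=0$ on $\{t<\tau\}$ and equals $0$ afterwards; since $0\le Z_t\le e^{-\theta r}$ and the martingale part is bounded below, $(Z_t)$ is a bounded nonnegative supermartingale. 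Hence $Z_\infty:=\lim_{t\to\infty}Z_t$ exists a.s.\ with $\E Z_\infty\le Z_0=e^{-\theta S}$, while $Z_\infty=e^{-\theta r}$ on $\{\tau<\infty\}$, so $\P\{\tau<\infty\}\le e^{-\theta(S-r)}$, i.e.
\[
\P\Big\{\inf_{t\ge0}|\phi_{0t}(x)|<r\Big\}\le\exp\Big\{-\tfrac{2\beta_*(\bar R)}{\sigma_B^2}(S-r)\Big\}.
\]
Letting $r\downarrow\bar R$ and using $\{\inf_{t\ge0}|\phi_{0t}(x)|\le\bar R\}=\bigcap_{r>\bar R}\{\inf_{t\ge0}|\phi_{0t}(x)|<r\}$ gives the claim. (Equivalently one may time-change $N_t=\sigma_BW_{\zeta(t)}$, $\zeta(t)\le t$, as in Proposition~\ref{oneptcond}, note that on $\{\tau<\infty\}$ the path of $-W$ lies above the line $u\mapsto(S-\bar R)/\sigma_B+(\beta_*(\bar R)/\sigma_B)u$ at $u=\zeta(\tau)$, and invoke the classical identity $\P\{\sup_{u\ge0}(W_u-\mu u)\ge a\}=e^{-2\mu a}$ for $\mu,a>0$; this is the same computation.)

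There is no deep obstacle here: the argument is one-dimensional gambler's ruin with drift. The two points requiring a little care are (i) producing the \emph{lower} radial drift bound $f\ge\beta_*(\bar R)$ — the role of the $\mathrm{Tr}\,a(y,y)$ term is opposite to that in Proposition~\ref{oneptcond}, and one simply discards it after invoking positive semi-definiteness of $a(y,y)$ — and (ii) the harmless bookkeeping at the level $\bar R$: since $\inf_{t\ge0}|\phi_{0t}(x)|$ may only be \emph{approached} as $t\to\infty$, it is cleanest to prove the estimate with a strict threshold $r>\bar R$ first (the same $\theta$ works, as $f\ge\beta_*(\bar R)$ already on $\{|\cdot|\ge\bar R\}\supseteq\{|\cdot|\ge r\}$) and then pass to the limit $r\downarrow\bar R$.
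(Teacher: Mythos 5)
Your proof is correct and takes essentially the same approach as the paper's: the radial It\^o decomposition from Proposition \ref{oneptcond}, the lower bound $f\ge\beta_*(\bar R)$ on $\{|\cdot|\ge\bar R\}$, and comparison to a Brownian motion with drift. The exponential supermartingale $\exp\{-\theta\rho_{t\wedge\tau}\}$ you tune to kill the drift is exactly what underlies the classical ruin formula the paper invokes from Borodin--Salminen, so the two arguments coincide; the passage $r\downarrow\bar R$ is a harmless refinement.
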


\begin{proof}
It suffices to prove the statement in case $\beta_*(\bar R) > 0$. 
Using the same notation as in the proof of Proposition \ref{oneptcond},
we get for $|x|=S$:
$$
\rho_t(x)= S + \int_0^t f(\phi_{0s}(x))\dd s + N_t \ge S+\beta_*(\bar R)t+ N_t \;\; \mbox{ on } \{t \le \tau\}.
$$
Therefore, using a well-known formula for the law of the supremum of a Wiener process with drift (e.g. \cite{BS96}, p.197),
 we obtain for $T>0$
\begin{eqnarray*}
\P\{\tau < T\} &\le& \P\left\{ \inf_{0\le t \le T}|\phi_{0t}(x)|\le \bar R \right\}\\
&\le& \P\left\{ \inf_{0\le t \le T} \{S+\beta_*(\bar R) t + N_t \}\le \bar R \right\}\\
&\le& \P\left\{ \inf_{0\le t \le T} \{\beta_*(\bar R) t - \sigma_B W_t^*\} \le \bar R -S\right\}\\
&\le& \P\left\{ \sup_{t \ge 0} \Big\{ -\frac{\beta_*(\bar R)}{\sigma_B} t + W_t\Big\} \ge \frac{S-\bar R}{\sigma_B} \right\}\\
&=& \exp \Big\{ -2\beta_*(\bar R) \frac{S-\bar R}{\sigma_B^2} \Big\},
\end{eqnarray*}
so the assertion in the proposition follows.
\end{proof}

The following proposition is a rather easy consequence of the preceding two propositions and the results in the appendix.
\begin{propo}\label{consequence}
Let $\psi: [1,\infty) \to [1,\infty)$ be strictly increasing such that $\lim_{S \to \infty} \frac{\psi(S)}{S}=0$ and 
$\lim_{S \to \infty} \frac{\log S}{\psi(S)}=0$. Let $\phi$ be a flow satisfying $\mathrm{(A1)}$, 
$\mathrm{(A2)}$ and $\mathrm(A3_{\beta})$ for some $\beta>\beta_0$, where $\beta_0$ is as in Theorem \ref{main}.
Let $\eps \in (0,1/2)$ and $\gamma > 0$ satisfy $\gamma + \eps < \beta - \beta_0$. For $S \ge 2$ define $T:=\psi(S)$, 
$\bar R :=(1-\eps) S$ and $R:=S+\gamma \psi(S)$. Denote
$$
p_S:=\P\left\{ \left\{ \mathrm{B}_R \nsubseteq \phi_{0T}(\mathrm{B}_S)\right\} \cup \cup_{0 \le t \le T} \left\{ \mathrm{B}_{\bar R} \nsubseteq \phi_{0t}(\mathrm{B}_{S})\right\} \right\}.
$$
Then $\limsup_{S \to \infty} \frac {1}{\psi(S)} \log p_S <0$.
\end{propo}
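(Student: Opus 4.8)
The plan is to bound $p_S$ by splitting the bad event into two pieces: (1) the event $E_1$ that the image $\phi_{0T}(\mathrm B_S)$ fails to contain $\mathrm B_R$, and (2) the event $E_2 = \cup_{0\le t\le T}\{\mathrm B_{\bar R}\nsubseteq \phi_{0t}(\mathrm B_S)\}$ that at some intermediate time the image drops below the intermediate radius $\bar R=(1-\eps)S$. We have $p_S\le \P(E_1)+\P(E_2)$, and the goal is to show each summand decays like $\exp(-c\,\psi(S))$ for some $c>0$ (with $c$ possibly different for the two), which gives the claimed $\limsup \frac{1}{\psi(S)}\log p_S<0$. Throughout I will use that $\phi_{0t}$ is a homeomorphism of $\R^d$ onto its (open) image and that $\partial B_S = \{|x|=S\}$ is mapped into $\partial(\phi_{0t}(B_S))$, so that $\mathrm B_\rho\subseteq \phi_{0t}(\mathrm B_S)$ holds as soon as the image of the sphere $\{|x|=S\}$ stays outside the closed ball $\mathrm B_\rho$ and $0\in\phi_{0t}(B_S)$ (the latter being guaranteed with overwhelming probability once we know the sphere's image avoids a ball around the origin and $0\in B_S$ initially — a point to be handled by a short connectedness/topological-degree remark).

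The first main step is the \textbf{single-point estimate combined with a net argument.} Cover the sphere $\{|x|=S\}$ by $N$ points $x_1,\dots,x_N$ such that the balls $\mathrm B(x_j,1)$ cover it; one can take $N\le C_d S^{d-1}$. For a fixed center $x_j$: by Proposition~\ref{oneptcond2} (with the given $\bar R=(1-\eps)S$) the probability that $\inf_{t\ge 0}|\phi_{0t}(x_j)|\le\bar R$ is at most $\exp\{-2(S-\bar R)\beta_*(\bar R)/\sigma_B^2\} = \exp\{-2\eps S\,\beta_*(\bar R)/\sigma_B^2\}$, and since $\beta_*(\bar R)\to$ (a quantity $\ge \beta>\beta_0>0$) as $S\to\infty$ by $\mathrm{(A3_\beta)}$, this is $\le\exp(-c_1 S)$ for large $S$ — much smaller than any $\exp(-c\psi(S))$ since $\psi(S)=o(S)$. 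By Proposition~\ref{oneptcond} b) (with $R$ in the role of $R$, $S$ in the role of $S$, and $T=\psi(S)$), the probability that $|\phi_{0T}(x_j)|\le R$ while staying outside $\mathrm B_{\bar R}$ is at most $\exp\{-\tfrac12((\beta_*(\bar R)\sqrt T/\sigma_B - (R-S)/(\sigma_B\sqrt T))^+)^2\}$; here $R-S=\gamma\psi(S)=\gamma T$, so the exponent's bracket equals $\frac{\sqrt T}{\sigma_B}(\beta_*(\bar R)-\gamma)$, which for large $S$ is $\ge \frac{\sqrt T}{\sigma_B}(\beta-\gamma-o(1))>0$, so this probability is $\le \exp\{-c_2 T\}$ with $c_2$ close to $(\beta-\gamma)^2/(2\sigma_B^2)>0$. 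A union bound over the $N\le C_d S^{d-1}$ centers costs a factor $C_d S^{d-1}$, which is absorbed into the exponential decay because $\log(S^{d-1})=O(\log S)=o(\psi(S))$ by hypothesis on $\psi$.

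The second main step, and the \textbf{place where the appendix is essential}, is converting control of the \emph{centers} $x_j$ into control of the whole sphere. The displacement of any $y$ on the sphere from the nearest center $x_j$ (within distance $1$) is governed by the two-point motion, and the appendix results (referenced as Theorem~\ref{ball} / Lemma~\ref{two} and the diameter estimates mentioned in the proof sketch) give, for the flow restricted to a ball of radius $1$, an exponential bound on the probability that $\sup_{0\le t\le T}\di\,\phi_{0t}(\mathrm B(x_j,1)) \ge 1$, of the form $\exp\{-c_3\log(1/\text{something})\}$ or, more usefully, a bound that after a union over the $N$ balls is still $\le \exp\{-c\psi(S)\}$ — this works precisely because the relevant diameter-growth bound decays fast enough in $T$ and the number of balls $N$ is only polynomial in $S$ while $\log S=o(\psi(S))$. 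On the complement of all these events we have: every center $x_j$ satisfies $|\phi_{0t}(x_j)|>\bar R$ for all $t\le T$ and $|\phi_{0T}(x_j)|>R$, and every small ball $\mathrm B(x_j,1)$ has diameter $<1$ under $\phi_{0t}$ for all $t\le T$; hence for any $y$ with $|y|=S$, picking $x_j$ with $|y-x_j|\le 1$, $|\phi_{0t}(y)-\phi_{0t}(x_j)|<1$, so $|\phi_{0t}(y)|>\bar R-1$ for all $t\le T$ and $|\phi_{0T}(y)|>R-1$. Adjusting $\bar R$ and $R$ by the harmless additive constant $1$ (this is why the proposition is stated with $\bar R=(1-\eps)S$ and $R=S+\gamma\psi(S)$ — there is slack: $\bar R-1<(1-\eps)S$ and one wants the conclusion for the radius $\bar R$ itself, so one should run the argument with intermediate radius $\bar R+1$ and final radius $R+1$, which changes the constants in the exponents only by $o(1)$ as $S\to\infty$), and invoking the topological fact above, we conclude $\mathrm B_{\bar R}\subseteq\phi_{0t}(\mathrm B_S)$ for all $t\le T$ and $\mathrm B_R\subseteq\phi_{0T}(\mathrm B_S)$.

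The \textbf{main obstacle} is the second step: getting a diameter-growth bound for small balls that is simultaneously (a) exponential in a way that survives the polynomial-in-$S$ union bound over the net, and (b) valid uniformly for $t$ ranging over all of $[0,T]$ rather than just at the endpoint $T$. This is exactly the role of the chaining machinery in the appendix (and is why the authors single it out as the technically novel ingredient); the constant $\beta_0$ enters here, as it is the bound governing how fast a small ball's image can expand, so one needs $\gamma+\eps<\beta-\beta_0$ to ensure the diameter growth ($\lesssim\beta_0 T$, loosely) does not catch up with the gap ($\beta T$ versus $\gamma T$) between where the centers go and where we need the sphere's image to stay. Assembling: $p_S\le C_dS^{d-1}\big(\exp(-c_1 S)+\exp(-c_2 T)\big) + (\text{diameter terms}) \le \exp(-c\,\psi(S))$ for all large $S$ and some $c>0$, which is the assertion.
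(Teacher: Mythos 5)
Your plan has the right skeleton — cover the sphere $\partial\mathrm{B}_S$ by small balls, control the one-point motion of the centers via Propositions \ref{oneptcond}b) and \ref{oneptcond2}, and control the diameter of the small balls via the appendix — and these are indeed the three terms the paper estimates. However, there is a genuine gap in the diameter step that also obscures where $\beta_0$ comes from.

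You cover $\partial\mathrm{B}_S$ by $N\lesssim S^{d-1}$ balls of \emph{fixed} radius $1$, and then assert that the appendix yields a bound on $\P\{\sup_{0\le t\le T}\mathrm{diam}\,\phi_{0t}(\mathrm{B}(x_j,1))\ge 1\}$ that ``decays fast enough in $T$.'' This cannot be right: a ball of radius $1$ already has diameter $2>1$ at time $0$, so that probability equals $1$. Even if one uses a fixed radius $\xi<1/2$, Theorem \ref{ball} gives a bound of the form $C\exp\{(\Lambda+\tfrac12 q\sigma^2)qT\}\,\xi^q$, which \emph{grows} in $T$ — for fixed $\xi$ the probability that a small ball's image reaches diameter $1$ by time $T$ tends to $1$, not $0$. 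The idea you are missing is that the covering balls must have radius $\xi=\ee^{-\Gamma T}=\ee^{-\Gamma\psi(S)}$, i.e.\ exponentially small in $T$. Then $N\lesssim S^{d-1}\ee^{(d-1)\Gamma T}$, which is exponentially large, but the diameter probability decays like $\ee^{-I(\Gamma)T}$ with $I(\Gamma)=(\Gamma-\lambda)^2/(2\sigma_L^2)$, and the whole point is the competition $(d-1)\Gamma$ versus $I(\Gamma)$: it is negative iff $\Gamma>\Gamma_0$ where $(d-1)\Gamma_0=\beta_0^2/(2\sigma_B^2)$. Simultaneously, the one-point term picks up the exponential factor $\ee^{(d-1)\Gamma T}$, so one also needs $(d-1)\Gamma<(\beta-\gamma)^2/(2\sigma_B^2)$, which is compatible with $\Gamma>\Gamma_0$ exactly when $\gamma<\beta-\beta_0$. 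Your proposal skips this balance; the hand-wave that the diameter grows ``$\lesssim\beta_0 T$, loosely'' and must not ``catch up with the gap'' is not the mechanism — in the correct argument the diameter stays below $1$ with high probability and $\beta_0$ emerges from the optimization over the covering scale $\Gamma$, not from a linear-in-$T$ diameter growth. So while your decomposition $p_S\le N(A_1+A_2+A_3)$ matches the paper's, the decisive quantitative step (exponentially fine covering and the $\Gamma$-tradeoff that produces $\beta_0$) is absent, and as written the $A_3$ estimate is simply false.
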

\begin{proof}
We can and will assume that $S>\varepsilon^{-1}$. For each $\xi \in (0,S]$, we can cover $\partial \mathrm{B}_S$ by 
$N=N_{\xi} \le c_d \left(\frac {S}{\xi}\right)^{d-1}$ balls with radius $\xi$ 
centered on $\partial \mathrm{B}_S$, where $c_d$ is 
a universal constant which only depends on the dimension $d$. Specifically, we will let $\xi=\exp\{-\Gamma \psi(S)\}$, 
for some $\Gamma \ge 0$. Denote the balls by $M_1,...,M_N$ and their centers by
$x_1,...,x_N$. Using the flow property and Propositions \ref{oneptcond} and \ref{oneptcond2}, we get
\begin{eqnarray*}
p_S&\le& N \max_i \big\{ \P(|\phi_{0T}(x_i)| \le R+1,\inf_{0 \le t \le T} |\phi_{0t}(x_i)| > \bar R+1) + 
\P(\inf_{0 \le t \le T} |\phi_{0t}(x_i)| \le \bar R+1)\\ 
&&+ \P(\sup_{0 \le t \le T} \, \mathrm{diam}\,\phi_{0t}(M_i) \ge 1) \big\}\\
&\le& N(S,\Gamma) \big(A_1(S)+A_2(S)+A_3(S,\Gamma)\big),
\end{eqnarray*}
where
\begin{eqnarray*}
A_1(S)&=&  \exp\Big\{ -\frac 12 \left( \left( \frac{\beta_*(\bar R + 1)}{\sigma_B} \sqrt{T} - \frac{R+1-S}{\sigma_B \sqrt{T}} \right)^+\right)^2\Big\}\\
&=&  \exp\Big\{ -\frac {\psi(S)}{2 \sigma_B^2} \left( \left(\beta_*(\bar R + 1)- \gamma- \frac{1}{\psi(S)}\right)^+\right)^2 \Big\}\\
A_2(S)&=& \exp\Big\{-2\beta_*(\bar R +1)\frac{S-\bar R-1}{\sigma_B^2}\Big\}=\exp\Big\{-2\beta_*(\bar R +1)\frac{\eps S-1}{\sigma_B^2}\Big\} \\
A_3(S,\Gamma)&=&  \max_i \P\big\{\sup_{0 \le t \le T} \, \mathrm{diam}\,\phi_{0t}(M_i) \ge 1 \big\}  \\
N(S,\Gamma)&=&c_d S^{d-1}\exp\{(d-1)\Gamma \psi(S)\}.
\end{eqnarray*}
Clearly, $\lim_{S \to \infty} \frac{1}{\psi(S)} \log(N(S,\Gamma)A_2(S))=-\infty$.
Further, Theorem \ref{ball}, the remark following the proof of Theorem \ref{ball}, and Lemma \ref{two} show that
$$
\limsup_{S \to \infty}\frac{1}{\psi(S)} \log A_3(S,\Gamma)\le -\frac{1}{2\sigma_L^2}(\Gamma-\lambda)^2, 
$$
provided that $d\ge 2$ and $\Gamma \ge \lambda + \sigma_L^2 d$ or $d=1$ and $\Gamma \ge \lambda$. Therefore,
\begin{equation}\label{right}
\limsup_{S \to \infty}\frac{1}{\psi(S)} \log(N(S,\Gamma)A_3(S,\Gamma)) \le (d-1)\Gamma -\frac{1}{2\sigma_L^2}(\Gamma-\lambda)^2, 
\end{equation}
which is negative if $\Gamma$ exceeds the larger of the two roots of the right-hand side of
\eqref{right}, i.e.
$$
\Gamma > \Gamma_0 := \lambda + \sigma_L^2(d-1) + \big( 2\lambda \sigma_L^2(d-1)+\sigma_L^4(d-1)^2 \big)^{1/2}.
$$
Finally, observing that $(d-1)\Gamma_0=\frac{\beta_0^2}{2\sigma_B^2}$, we get
\begin{eqnarray*}
\limsup_{S \to \infty}\frac{1}{\psi(S)} \log(N(S,\Gamma)A_1(S)) &\le& (d-1)\Gamma -\frac{1}{2\sigma_B^2}(\beta-\gamma)^2\\
&\le& (d-1)\Gamma -\frac{1}{2\sigma_B^2}(\beta_0+\eps)^2 <0,\\
\end{eqnarray*}
provided that $\Gamma - \Gamma_0>0$ is sufficiently small. Therefore, we can find some $\Gamma > \Gamma_0$ satisfying all 
of these conditions and the proof is complete.
\end{proof}

Now, we can easily complete the proof of part b) of Theorem \ref{main}.\\

\noindent {\bf Proof of Theorem \ref{main} b).}
Let $0<\gamma < \beta - \beta_0$ and choose $\varepsilon \in (0,1/2)$ such that $\gamma + \varepsilon < \beta - \beta_0$. 
Let $S_0 \ge 2$ and define recursively $S_{i+1}=S_i + \gamma \psi(S_i)$. Define $p_S$ as in the previous proposition. 
By the previous proposition, we know that $\sum_i p_{S_i}$ converges provided that $\sum_i \exp\{-c\psi(S_i)\}$ converges for every 
$c >0$, which is easily seen to be true if we take $\psi(x)=x^{\alpha}$ for some $\alpha \in (0,1)$, and part b) of 
Theorem \ref{main} then follows from the first Borel-Cantelli Lemma and the time-homogeneity of $\phi$.
\hfill $\Box$\\

We now provide the proof of part a) of Theorem \ref{main}. It is partly analogous to the previous one with the exception 
that it does not seem to be obvious how to prove the analog of Proposition \ref{oneptcond2}. 
The following two propositions provide additional estimates for the one-point motion.

\begin{propo}\label{oneptcondmodified}
Let $\phi$ be a flow satisfying conditions $\mathrm{(A1)}$, $\mathrm{(A2)}$ and $\mathrm{(A3^{\beta})}$ for some $\beta<0$ 
and let $V>1$ satisfy  $\beta^*(V)\le 0$.  
Then, for each $S\ge R \ge V,\,x \in \R^d$, we have
$$
\P\{ |\phi_{0T}(x)|\ge S,\,\inf_{0 \le t \le T} |\phi_{0t}(x)|\le R\} \le 2\exp\Big\{-\frac{1}{8T}\Big(\frac{S-R}{\sigma_B}\Big)^2\Big\}.
$$
\end{propo}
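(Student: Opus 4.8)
The plan is to reuse the radial‑process machinery from the proof of Proposition \ref{oneptcond}: set $\rho_t(x)=h(|\phi_{0t}(x)|)$ with the same smoothing function $h$, so that on $\{\rho_t(x)\ge V\}$ we have $\dd\rho_t(x)=\dd N_t+f(\phi_{0t}(x))\dd t$ with $\langle N\rangle_t\le\sigma_B^2 t$ and $f\le\beta^*(V)\le 0$. Thus, as long as the radial process stays above $V$, it is a supermartingale: it can be written as $\sigma_B W_{\zeta(t)}$ plus a nonincreasing drift, with $\zeta(t)\le t$. The key structural observation is that the event $\{|\phi_{0T}(x)|\ge S,\ \inf_{0\le t\le T}|\phi_{0t}(x)|\le R\}$ forces the radial process to travel from a value $\le R$ up to a value $\ge S$ at some point in $[0,T]$; since $R\ge V$, the whole relevant excursion takes place in the region where the drift of $\rho$ is nonpositive.

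The main step is therefore a \emph{first‑passage / two‑piece} argument. Let $\tau:=\inf\{t\ge 0:\rho_t(x)\le R\}$ (if $|x|<R$ this is $0$). On the event in question, $\tau\le T$ and also $\sup_{\tau\le t\le T}\rho_t(x)\ge S$. Conditionally on $\F_\tau$, the process $t\mapsto\rho_{\tau+t}(x)-\rho_\tau(x)$ starts at a value $\le 0$ (since $\rho_\tau(x)\le R$ on $\{\tau<\infty\}$, and in fact $=R$ when $\tau>0$) and, while it stays in $\{\rho\ge V\}$, is dominated by a time‑changed Brownian motion $\sigma_B W_{\zeta}$ with $\zeta\le\cdot$; to reach level $S$ it must increase by at least $S-R$. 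Hence
\[
\P\{\,\exists\,t\in[\tau,T]:\rho_t(x)\ge S\mid\F_\tau\}\le\P\{\sigma_B W^*_T\ge S-R\}\le\exp\Big\{-\tfrac{(S-R)^2}{2\sigma_B^2 T}\Big\},
\]
using $\zeta\le T$, the supermartingale structure (the drift only helps), and Lemma \ref{comp_lemma_2}. But one has to be careful about the barrier $\{\rho\ge V\}$: once $\rho$ drops below $V$ the clean estimate on $f$ is lost. The clean way to handle this is to restart: define a sequence of stopping times at which $\rho$ successively hits $R$ (after dipping below, or after first dipping below $V$ then returning to $R$), and note that on each up‑interval the radial process is a genuine supermartingale dominated by time‑changed Brownian motion; the factor $2$ in the statement is exactly the slack one expects from summing over the "last downcrossing to $R$" before the final upcrossing to $S$, i.e.\ from a reflection/strong‑Markov bookkeeping of which excursion achieves the maximum. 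Concretely I would bound the target probability by $\P\{\rho_T(x)\ge S,\ \tau\le T\}$ and split according to whether, after $\tau$, $\rho$ stays $\ge V$ until it reaches $S$ (handled by the displayed bound) or it first returns below $V$ (in which case apply the strong Markov property at the return time to $R$ and iterate); a geometric‑type summation or a direct "last exit from $B_R$" decomposition collapses to the single exponential with the harmless prefactor $2$ and the constant $8$ in the exponent (the $8=2\cdot 4$ absorbing the loss from splitting $S-R$ into an "up from $R$ to $(R+S)/2$" and "up from $(R+S)/2$ to $S$" type argument, whichever packaging is cleanest).

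The main obstacle, then, is purely the bookkeeping around the inner radius $V$: unlike Proposition \ref{oneptcond}, where one conditions on \emph{never} going below $\bar R$, here the infimum is explicitly allowed to be small, so the radial drift bound $f\le\beta^*(V)\le 0$ is only valid on part of the path and one must patch together the controlled pieces. I expect the cleanest route is the "last time $\rho$ equals $R$ before time $T$" decomposition together with the strong Markov property, which reduces everything to a single Brownian first‑passage estimate on an interval of length $\le T$; all the analytic content (Itô's formula, the bound on $\langle N\rangle$, the time change) is already in place from Proposition \ref{oneptcond}, so no genuinely new estimate is needed, only a careful excursion argument to produce the stated constants $2$ and $8$.
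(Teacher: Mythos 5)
You correctly identify the structural heart of the matter — that on the event the radial process $\rho_t(x)=h(|\phi_{0t}(x)|)$ must climb from $R$ up to $S$ while remaining in the region where the radial drift $f$ is $\le\beta^*(V)\le 0$ because $R\ge V$ — but you then abandon this observation, and the argument you sketch in its place does not close. The step you are missing is a one-line \emph{deterministic} decomposition with no stopping time, no strong Markov property and no excursion iteration: on the event, by continuity there exist $0\le s<t\le T$ with $\rho_s(x)=R$, $\rho_t(x)=S$ and $\inf_{s\le u\le t}\rho_u(x)\ge R$ (take $t$ to be the first time $\rho$ reaches $S$ and $s$ the last time before $t$ at which $\rho=R$). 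On $[s,t]$ the process never leaves $\{\rho\ge R\}\subseteq\{\rho\ge V\}$, so the drift is controlled throughout and your worry about $\rho$ dipping below $V$ simply never arises; there is nothing to patch.

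The price for choosing the \emph{last} time at $R$ rather than the first is that this $s$ is \emph{not} a stopping time, so one cannot restart a fresh Brownian motion there and quote $\P\{\sigma_B W^*_T\ge S-R\}\le\exp\{-(S-R)^2/(2\sigma_B^2T)\}$ as you do after conditioning on $\F_\tau$. One must instead bound the probability that \emph{some} Brownian increment is large: after dropping the nonpositive drift, writing $N_t=\sigma_B W_{\zeta(t)}$ with $\zeta(t)-\zeta(s)\le t-s$, and rescaling $[0,T]$ to $[0,1]$, the quantity to estimate is $\P\{\exists\,0\le s'<t'\le 1:W(t')-W(s')\ge c\}$ with $c=(S-R)/(\sigma_B\sqrt T)$. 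Since $\{W(t')-W(s')\ge c\}\subseteq\{W(t')\ge c/2\}\cup\{-W(s')\ge c/2\}$, this is at most $2\,\P\{W_1^*\ge c/2\}\le 2\exp\{-c^2/8\}$ by Lemma \ref{comp_lemma_2}. Thus the prefactor $2$ comes from the two tails of $W$ and the $8$ from halving the threshold in each — not, as you suggest, from a geometric sum over restarted excursions or from splitting $S-R$ at the midpoint of a two-stage first passage. Your restart-and-iterate scheme has a real gap as well: the number of excursions from $R$ on $[0,T]$ is not bounded and the excursion events are not disjoint, so the proposed geometric-type summation does not obviously converge, let alone to the stated constants.
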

\begin{proof}
Define $\rho$, $N$, and $W$ as in the proof of Proposition \ref{oneptcond}. Then
\begin{align*}
&\P\{ |\phi_{0T}(x)|\ge S,\,\inf_{0 \le t \le T} |\phi_{0t}(x)|\le R \}\\ 
&\quad \le \P\{\exists\, 0 \le s< t \le T:  \rho_s(x)=R,\, \rho_t(x)=S,\, \inf_{s \le u \le t}\rho_u(x) \ge R\}\\
&\quad \le \P\{\exists\, 0 \le s< t \le T: \sigma_B W(t)-\sigma_B W(s)+ \beta^*(V)(t-s) \ge  S-R\}\\
&\quad \le \P\Big\{\exists\, 0 \le s< t \le 1:  (W(t)-W(s)) 
\ge \frac{S-R}{\sigma_B \sqrt{T}}\Big\} \\
&\quad \le 2 \P\Big\{ \sup_{0 \le t \le 1}W(t)\ge \frac{S-R}{2\sigma_B\sqrt{T}}\Big\}\\
&\quad \le 2 \exp \Big\{-\frac 1{8T} \Big(\frac {S-R}{\sigma_B}\Big)^2\Big\}
\end{align*}
and the proof of the proposition is complete. 
\end{proof}
We remark that the statement in the previous proposition can be sharpened if we do not drop the drift term in the proof 
(see \cite{S08}, Proposition 2.8) but the statement above meets our demands perfectly. We will also need the following result.

\begin{propo}\label{coro} Let $\phi$ be a flow satisfying conditions $\mathrm{(A1)}$, $\mathrm{(A2)}$ 
and $\mathrm{(A3^{\beta})}$ such that $\beta<0$.  Let $V >1$ be such that $\beta^*(V)\le 0$ and 
$\bar R\ge V$.
Then for each $|x|=\bar R$, $\delta >0$, and $h>0$, we have
$$
\P\{\sup_{0 \le s \le h}|\phi_{0,s}(x)|\ge \bar R + \delta\} \le 3\exp\Big\{-\frac 1 8 \frac{\delta^2}{\sigma_B^2 h}\Big\}.
$$
\end{propo}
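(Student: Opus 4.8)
The plan is to follow the template of the proofs of Propositions \ref{oneptcond} and \ref{oneptcondmodified}, upgrading the estimate from an endpoint to a running maximum, with one extra device to cope with the fact that $\phi_{0,\cdot}(x)$ issues from the sphere $\{|y|=\bar R\}$ itself. I would reuse the notation $\rho_t(x)=h(|\phi_{0,t}(x)|)$, $N_t$, $\zeta$ and the auxiliary standard Brownian motion $W$ from the proof of Proposition \ref{oneptcond}, so that $\rho$ is a continuous semimartingale with $\dd\rho_t=\dd N_t+f(\phi_{0,t}(x))\,\dd t$, $\langle N\rangle_t\le\sigma_B^2 t$, and (possibly after enriching the space) $N_t=\sigma_B W_{\zeta(t)}$ with $\zeta$ continuous, non-decreasing and $0\le\zeta(s)\le\zeta(t)\le t$ whenever $s\le t$. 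First I would record that $r\mapsto\beta^*(r)$ is non-increasing (the supremum in its definition is taken over the shrinking family $\{|y|\ge r\}$, and the summand $(d-1)\sigma_B^2/(2r)$ decreases in $r$), so that the hypotheses $\beta^*(V)\le 0$ and $\bar R\ge V$ force $\beta^*(\bar R)\le 0$; consequently, exactly as in Proposition \ref{oneptcond}, $f(y)\le\beta^*(\bar R)\le 0$ whenever $|y|\ge\bar R$.

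The heart of the argument is a last-exit decomposition. On the event $A:=\{\sup_{0\le s\le h}|\phi_{0,s}(x)|\ge\bar R+\delta\}$, the continuous process $\rho$ starts at $\rho_0=\bar R$ (because $\bar R\ge V>1$) and reaches the level $\bar R+\delta>1$; let $t^*\le h$ be the first time it does so, and set $s^*:=\sup\{u\le t^*:\rho_u\le\bar R\}$, which is well defined since $\rho_0=\bar R$. Continuity then forces $\rho_{s^*}=\bar R$, $\rho_{t^*}=\bar R+\delta$ and $\rho_u\ge\bar R$ for every $u\in[s^*,t^*]$; on that interval $|\phi_{0,u}(x)|=\rho_u\ge\bar R$, hence $f(\phi_{0,u}(x))\le 0$. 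Integrating the decomposition of $\rho$ over $[s^*,t^*]$ gives $\delta=\rho_{t^*}-\rho_{s^*}\le N_{t^*}-N_{s^*}$, which yields the event inclusion $A\subseteq\{\exists\,0\le s\le t\le h:\ N_t-N_s\ge\delta\}$. Substituting $N_t=\sigma_B W_{\zeta(t)}$ with $0\le\zeta(s)\le\zeta(t)\le h$, and then rescaling the Brownian motion via $r\mapsto W_{hr}/\sqrt{h}$, this last event lies in $\{\exists\,0\le a\le b\le 1:\ W_b-W_a\ge\delta/(\sigma_B\sqrt{h})\}$.

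The final step is the routine maximal estimate also used in the proof of Proposition \ref{oneptcondmodified}: if $W_b-W_a\ge c$ for some $0\le a\le b\le 1$ then $\sup_{0\le t\le 1}W_t-\inf_{0\le t\le 1}W_t\ge c$, so at least one of $\sup_{0\le t\le 1}W_t\ge c/2$ and $\inf_{0\le t\le 1}W_t\le -c/2$ holds; applying Lemma \ref{comp_lemma_2} to $W$ and, by symmetry, to $-W$ with $c=\delta/(\sigma_B\sqrt{h})$ bounds the probability of each by $\exp\{-c^2/8\}$, so that $\P(A)\le 2\exp\{-\delta^2/(8\sigma_B^2 h)\}$, which is in particular at most the asserted $3\exp\{-\delta^2/(8\sigma_B^2 h)\}$.

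The one genuinely delicate point is the last-exit step. Since the trajectory starts on the sphere of radius $\bar R$ it immediately fluctuates to both sides of it, so one cannot claim that the drift is non-positive on all of $[0,t^*]$, and there is no usable bound on $f$ inside the ball $\mathrm{B}_{\bar R}$; replacing $[0,t^*]$ by $[s^*,t^*]$ is exactly what places the whole increment $\delta$ on the martingale part. That $s^*$ is a last-exit time rather than a stopping time is immaterial, since only the event inclusion above is used. I do not anticipate any other obstacle: the time change representing $N$ as a time-changed Brownian motion, the Brownian scaling, and the maximal inequality are all routine and already appear in the preceding propositions.
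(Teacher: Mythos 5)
Your proof is correct and in fact yields a slightly sharper constant ($2$ in place of $3$), but it follows a genuinely different decomposition from the paper's. The paper introduces the stopping times $\tau_1:=\inf\{s>0:|\phi_{0,s}(x)|\ge \bar R+\delta\}$ and $\tau_2:=\inf\{s>0:|\phi_{0,s}(x)|\le V\}$ and splits the probability over $\{\tau_2>\tau_1\}$ and $\{\tau_2<\tau_1\}$: on the first event the drift of $\rho$ is $\le\beta^*(V)\le 0$ throughout $[0,\tau_1]$ and one argues as in the proof of Proposition \ref{oneptcond}; on the second, the paper isolates the last excursion of $\rho$ from level $V$ up to $\bar R+\delta$ and, using $\bar R\ge V$ (so $\bar R+\delta-V\ge\delta$), argues as in Proposition \ref{oneptcondmodified}. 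You instead perform a single last-exit decomposition at level $\bar R$, having first noted the elementary monotonicity $\beta^*(\bar R)\le\beta^*(V)\le 0$, which makes the drift non-positive along the whole last excursion above $\bar R$ from $s^*$ to $t^*$. This subsumes both of the paper's cases at once and places the entire increment $\delta$ on the martingale part, giving the cleaner bound $2\exp\{-\delta^2/(8\sigma_B^2 h)\}$. The paper's two-case structure mirrors the earlier one-point estimates and keeps the fixed threshold $V$ visible, at the cost of one more term and the constant $3$. Your remaining observations are all sound: the last-exit time $s^*$ need not be a stopping time since only the pathwise event inclusion is used, $\rho_{s^*}=\bar R$ by continuity since $\rho_{t^*}>\bar R$, and the final step $\{W_b-W_a\ge c\ \text{for some}\ 0\le a\le b\le 1\}\subseteq\{\sup_{[0,1]}W\ge c/2\}\cup\{\inf_{[0,1]}W\le -c/2\}$ combined with Lemma \ref{comp_lemma_2} is exactly the maximal device used in the proof of Proposition \ref{oneptcondmodified}.
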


\begin{proof}
Let
$$
\tau_1:=\inf\{s>0:|\phi_{0,s}(x)|\ge \bar R + \delta\},\qquad \tau_2:=\inf\{s>0:|\phi_{0,s}(x)|\le V\}.
$$
Then
$$
\P\{\sup_{0\le s \le h}|\phi_{0,s}(x)|\ge \bar R + \delta\}
\le \P\{\tau_1 \le h,\,\tau_2>\tau_1\} + \P\{\tau_1 \le h,\,\tau_2<\tau_1\}.
$$
Arguing like in the proof of Proposition \ref{oneptcond}, we get
\begin{align*}
 \P\{\tau_1 \le h,\,\tau_2>\tau_1\}&=\P\{\sup_{0\le s \le h}|\phi_{0,s}(x)|\ge \bar R + \delta,\,\inf_{0 \le s \le \tau_1}|\phi_{0,s}(x)|> V\} 
\le \exp\Big\{ -\frac 12 \frac {\delta^2}{\sigma_B^2 h} \Big\},
\end{align*}
and, arguing like in the proof of Proposition \ref{oneptcondmodified},
\begin{align*}
\P\{\tau_1 \le h,\,\tau_2<\tau_1\}&=\P\{\sup_{0\le s \le h}|\phi_{0,s}(x)|\ge \bar R + \delta,\,\inf_{0 \le s \le \tau_1}|\phi_{0,s}(x)|\le V \}\\
&\le \P\big\{\exists\, 0 \le s< t \le h:  \rho_s(x)=V,\, \rho_t(x)=\bar R + \delta,\, \inf_{s \le u \le t}\rho_u(x) \ge V\big\}\\
& \le \P\Big\{\exists\, 0 \le s< t \le 1:  W(t)-W(s)=\frac{\bar R + \delta-V}{\sigma_B \sqrt{h}}\Big\}\\
&\le 2 \P\Big\{\sup_{0 \le s \le 1}W(s)\ge \frac{\bar R + \delta-V}{2\sigma_B \sqrt{h}}\Big\}\\
&\le 2 \exp\Big\{ -\frac 12 \frac {\delta^2}{4 \sigma_B^2 h} \Big\},
\end{align*}
where we used Lemma \ref{comp_lemma_2} and the fact that $\bar R \ge V$, so the assertion of the corollary follows.
\end{proof}
\bigskip
\noindent {\bf Proof of Theorem \ref{main} a).} For the reader's convenience, we start by stating all assumptions and notation 
in the proof.

Let $\varepsilon \in (0,1/2)$ and $\gamma >0$ satisfy $\gamma+\varepsilon < -\beta - \beta_0$, $\alpha \in (0,\frac 13)$, 
$\frac{2}{\alpha}>\kappa>\nu>1$, $\psi(x)=x^{\alpha}$, $S:=T^{1/\alpha}$, $R:=S+\gamma T$, 
$\bar R:=(1-\ve)S$, $c:=\frac{6}{\pi^2}$, $\ve_j:=c/j^2$ $(j\in \N)$, 
$\delta:=\delta(j,T):=T^{\kappa/2}\left( \frac 23 \right)^{j/2}$. 

Further, $V>1$ is a fixed number (not depending on $T$) such that $\beta^*(V) \le 0$. Since we are only interested in asymptotic statements as
$T \to \infty$ we can and will assume that $\bar R > V$. Let
\begin{eqnarray*}
p_S&:=&\P\{\mathrm{B}_R \nsubseteq \phi_{0T}^{-1}(\mathrm{B}_S) \mbox { or } \mathrm{B}_{\bar R}\nsubseteq \phi_{sT}^{-1} (\mathrm{B}_S) 
\mbox { for some } s \in [0,T] \}\\
&\le&\P\left\{\bigcup_{|x|=R}\left( \left\{  |\phi_{0T}(x)| \ge S\right\} \cap\left\{ \inf_{0 \le t \le T} |\phi_{0t}(x)| \ge \bar R   \right\} \right)   \right\} 
+ \P\left\{ \sup_{|x|=\bar R}  \sup_{s \in [0,T]}   |\phi_{sT}(x)| \ge S    \right\}   \\
&=:&A_1 + A_2.
\end{eqnarray*}
Once we know that $\limsup_{S \to \infty}\frac {1}{\psi(S)}\log p_S<0$, then Theorem \ref{main} a) will follow just 
like part b).
To estimate $A_1$, we cover $\partial \mathrm{B}_R$ by 
$N \le c_d R^{d-1} \ee^{\Gamma (d-1)T}$ balls of radius $\ee^{-\Gamma T}$ 
centered on  $\partial \mathrm{B}_R$ and we obtain
\begin{eqnarray*}
\limsup_{S \to \infty} \frac{1}{\psi (S)} \log A_1 &\le &\Gamma (d-1) -\left\{ \left( \frac{1}{2 \sigma_B^2}(-\beta-\gamma)^2 \right)
\wedge  \left( \frac{1}{2\sigma_L^2} (\Gamma - \lambda)^2   \right)    \right\}<0 
\end{eqnarray*}
for an appropriate choice of $\Gamma \ge 0$ as in the proof of Proposition \ref{consequence} (using part a) of Proposition \ref{oneptcond} instead of 
part b)).\\

To finish the proof of Theorem \ref{main}, it suffices to prove that 
$$
\limsup_{T \to \infty} \frac 1T \log A_2 = 
\limsup_{T \to \infty} \frac 1T \log \P\Big\{ \sup_{|x|=\bar R}  \sup_{s \in [0,T]}   |\phi_{sT}(x)| \ge T^{1/\alpha} \Big\}
=-\infty.
$$ 
Define $Y_s:= \big(\sup_{|x|= \bar R} |\phi_{sT}(x)|-\bar R\big)^+$, 
$Z_s:= \big(\sup_{|x|= \bar R +T^{1/\alpha}\ve/2} |\phi_{sT}(x)|-\bar R\big)^+$,   $0 \le s \le T$. 
We have 
\begin{align*}
& \hspace{-.5cm}\limsup_{T \to \infty} \frac 1T \log \P\Big\{ \sup_{|x|=\bar R}  \sup_{s \in [0,T]}   |\phi_{sT}(x)| \ge T^{1/\alpha} \Big\}\\
= & \limsup_{T \to \infty}\frac 1T \log \P\big\{ \sup_{s \in [0,T]}  Y_s \ge \ve  T^{1/\alpha}   \big\}\\
\le & \limsup_{T \to \infty}\frac 1T \log\Big((T+1)\max_{0 \le s \le T-1} \P\Big\{\sup_{s \le t \le s+1} Y_t\ge  \ve   T^{1/\alpha}\Big\} \Big)\\
= & \limsup_{T \to \infty}\frac 1T \log \max_{0 \le s \le T-1} \P\Big\{\sup_{s \le t \le s+1} Y_t\ge  \ve   T^{1/\alpha}\Big\}\\
\le & \limsup_{T \to \infty}\frac 1T \log \max_{1 \le s \le T}\Big(\P\Big\{\sup_{s-1 \le t \le s}\sup_{|x|=\bar R}|\phi_{ts}(x)| \ge \bar R+ 
\frac \ve 2 T^{1/\alpha}\Big\} + \P\Big\{ Z_s \ge \ve  T^{1/\alpha}\Big\} \Big).\\
\end{align*}
We treat the two terms in the last sum separately. We start with the second one. We want to show that
\begin{equation}\label{small}
 \limsup_{T \to \infty}\frac 1T \log\Big(\sup_{0 \le s \le T}\P\big\{ Z_s \ge \ve   T^{1/\alpha}\big\}\Big)=-\infty.
\end{equation}
To show this, fix $0 \le s \le T$, abbreviate $\hat R:=\bar R+T^{1/\alpha}\ve /2$ and cover  the boundary $\partial \mathrm{B}_{\hat R}$ by  
$N \le c_d\hat R^{d-1} \ee^{\Gamma (d-1)T}$ balls of radius $\ee^{-\Gamma T}$ centered on $\partial \mathrm{B}_{\hat R}$ 
for some $\Gamma >0$ (the constant $c_d$ can be chosen to depend on $d$ only).  Number the balls by $B_1,...,B_N$ and 
their centers by $x_1,...,x_N$. Then
\begin{align*}
\P&\big\{ Z_s \ge \ve   T^{1/\alpha}\big\}= \P\big\{ \sup_{|x|=\hat R} |\phi_{sT}(x)|\ge \bar R + \ve  T^{1/\alpha}\big\}\\
&\le N\Big( \sup_{|x|=\hat R} \P\big\{|\phi_{sT}(x)|\ge \bar R +  \ve  T^{1/\alpha}-1 \big\} + 
\max_{i=1,...,N} \P\{ \mathrm{diam}\; \phi_{sT}(B_i) \ge 1\}\Big)\\
&\le N\Big( \sup_{|x|=\hat R} \P\big\{|\phi_{sT}(x)|\ge \bar R +  \ve  T^{1/\alpha}-1,\,\inf_{s\le t \le T}|\phi_{st}(x)|\ge V \big\}\\ 
&+ \sup_{|x|=\hat R} \P\big\{|\phi_{sT}(x)|\ge \bar R +  \ve  T^{1/\alpha}-1, \,\inf_{s\le t \le T}|\phi_{st}(x)|\le V \big\}  + 
\max_{i=1,...,N} \P\{ \mathrm{diam}\; \phi_{sT}(B_i) \ge 1\}\Big).
\end{align*}
Estimating the three summands using Propositions \ref{oneptcond}a), \ref{oneptcondmodified}, and Theorem  \ref{ball}, 
respectively, we obtain \eqref{small} by letting $\Gamma \to \infty$ (after taking the $\limsup$ over $T$).\\

It remains to show that
\begin{equation}\label{toshow}
\limsup_{T \to \infty}\frac 1T \log \max_{1 \le s \le T}\P\Big\{\sup_{s-1 \le t \le s}\sup_{|x|=\bar R}|\phi_{ts}(x)| \ge \bar R+ 
\frac \ve 2 T^{1/\alpha}\Big\} = -\infty.
\end{equation}
Proving this is not entirely straightforward. One might try to proceed as (by now) usual by covering 
$\partial \mathrm{B}_{\bar R} \times [s-1,s]$ by small balls and controlling the diameter of their images at time $s$ and 
the norm of the images of their centers at time $s$. One of the obstructions to this approach is that we have no uniform 
control of the component of the drift $b$ towards the origin, i.e. the norm of the solution process can 
drop considerably within a very short time (resulting in an uncontrollable increase of the diameter of a small 
space-time ball within a short time). What we can control is the speed away from the origin thanks to assumption
(A3$^{\beta}$). 
Therefore we proceed as follows: define  
$$
X_t:=\sup_{|x|=\bar R} \big( |\phi_{t1}(x)|-\bar R\big)^+,\qquad t \in [0,1].
$$
Applying Lemma \ref{chain} with $\ve_j:=c/j^2$, $j \in  \N$ (with $c=6/\pi^2$), we obtain
\begin{align*}
&\hspace{-.5cm}\P\Big\{\sup_{s-1 \le t \le s}\sup_{|x|=\bar R}|\phi_{ts}(x)| \ge \bar R+ \frac \ve 2 T^{1/\alpha}\Big\}\\ 
&=\P\Big\{\sup_{0\le t \le 1}\sup_{|x|=\bar R}|\phi_{t1}(x)| \ge \bar R+ \frac \ve 2 T^{1/\alpha}\Big\}\\
&=\P\Big\{\sup_{0\le t \le 1} X_t \ge \frac \ve 2 T^{1/\alpha}\Big\}\\
&\le \sum_{j=1}^\infty 2^{j-1} \sup_{0 \le t \le 1-2^{-j}} \P\Big\{ X_t-X_{t+2^{-j}} \ge \frac{c}{j^2} \frac \ve 2 T^{1/\alpha} \Big\}
\end{align*}
To estimate the probabilities in the last sum, we cover the boundary $\partial \mathrm{B}_{\bar R}$ by  
$N=N_j \le c_d(\bar R 2^j\ee^{T^\nu})^{d-1}$ balls of radius $2^{-j}\ee^{-T^\nu}$ centered on $\partial \mathrm{B}_{\bar R}$, 
where $\nu \in (1,2)$   
(the constant $c_d$ can be chosen to depend on $d$ only).  Number the balls by $M_1,...,M_N$ and 
their centers by $x_1,...,x_N$. For fixed $j,t$ and $|x|=\bar R$, let $\tilde x$ be the projection 
of $\phi_{t,t+2^{-j}}(x)$ on $\partial \mathrm{B}_{\bar R}$ (there will be no need to worry about the possible non-uniqueness of 
$\tilde x$). For $u \ge 0$ we get

\begin{align}\label{4star}
\begin{split}
&\P\Big\{ X_t-X_{t+2^{-j}} \ge u \Big\} \le N\Big(\max_{i=1,...N} \P\{ \mathrm{diam}\, \phi_{t1}(M_i) \ge \frac u2\}\\ 
&\hspace{.2cm}+\sup_{|x|=\bar R} \P\big\{ \big| |\phi_{t1}(x)|\vee\bar R
-|\phi_{t+2^{-j},1}(\tilde x)|\vee \bar R \big|\ge \frac u2,\, |\phi_{t,t+2^{-j}}(x)|
\ge \bar R\big\}\Big) 
\end{split}
\end{align}
There are two terms to estimate. We start with the second one. Recalling that $\delta =T^{\kappa/2}\big(\frac 23\big)^{j/2}$, 
and assuming that $|x|=\bar R$, we have
\begin{align}\label{3star}
\begin{split}
&\hspace{-.5cm} \P\big\{ \big| |\phi_{t1}(x)|\vee\bar R-|\phi_{t+2^{-j},1}(\tilde x)|\vee \bar R\big| 
\ge \frac u2,\, |\phi_{t,t+2^{-j}}(x)|\ge \bar R\big\}\\
&\le  \P\big\{|\phi_{t,t+2^{-j}}(x)|\ge \bar R+\delta\big\} + 
\sup_{|y|=\bar R,|y-z|\le \delta} \P\big\{\big| |\phi_{t+2^{-j},1}(y)|\vee\bar R-|\phi_{t+2^{-j},1}(z)|\vee \bar R\big| \ge \frac u2\big\}.
\end{split}
\end{align}
Again, we have two terms to estimate. By Proposition \ref{coro}, we have
\begin{align*}
\P&\big\{|\phi_{t,t+2^{-j}}(x)|\ge \bar R+\delta\big\} = \P\big\{|\phi_{0,2^{-j}}(x)|\ge \bar R+\delta\big\}
\le 3\exp\Big\{-\frac 18 \frac {T^{\kappa}\big(\frac43\big)^j}{\sigma_B^2}\Big\} 
\end{align*}
and therefore
\begin{align*}
&\hspace{-.5cm} \sum_{j=1}^\infty 2^{j-1} N \P\big\{|\phi_{t,t+2^{-j}}(x)|\ge \bar R+\delta\big\}\\
&\le \frac 32 c_d\bar R^{d-1}\ee^{T^\nu(d-1)}  \sum_{j=1}^\infty 2^{jd} \exp\Big\{-\frac 1{8\sigma_B^2}  
T^{\kappa} \Big(\frac 43\Big)^j\Big\}\\
&\le  \frac 32 c_d\bar R^{d-1}\ee^{T^\nu(d-1)} 2^d\exp\Big\{-\frac 1{8\sigma_B^2}  
T^{\kappa} \frac 43\Big\}\Big(1-2^d\exp\Big\{-\frac 1{8\sigma_B^2}  
T^{\kappa} \frac 43\Big( \frac 43-1\Big)\Big\}\Big)^{-1},
\end{align*}
where we estimated the infinite sum of the form $\sum_{j=1}^{\infty} p_j$ from above by  the geometric series 
$p_1\sum_{j=0}^{\infty} \big(\frac{p_2}{p_1}\big)^j$. Since $\kappa>\nu>1$, the term converges to zero 
superexponentially in $T$.

Next, we estimate the second term in \eqref{3star}. 
Lemma \ref{two} and Lemma \ref{comp_lemma_2} show that for $y,z \in \R^d$ such that $|y-z|\le \delta$ we have
\begin{align}\label{erste}
\begin{split}
&\hspace{-.5cm} \P\big\{\big| |\phi_{t+2^{-j},1}(y)|\vee \bar R-|\phi_{t+2^{-j},1}(z)|\vee \bar R\big| \ge \frac u2\big\}\\
&\le  \P\big\{|\phi_{t+2^{-j},1}(y)-\phi_{t+2^{-j},1}(z)| \ge \frac u2\big\}\\
&\le  \P\big\{\delta \exp\{\sigma_L W_1^* +\lambda\} \ge \frac u2\big\}\\
&\le \exp\Big\{-\frac 1 {2\sigma_L^2}\Big( \Big(\log \frac u{2\delta}-\lambda\Big)^+\Big)^2\Big\}
\end{split}
\end{align}
and for $|y|=\bar R$, $|y-z|\le \delta$, we have
\begin{align}\label{zweite}
\begin{split}
&\hspace{-.5cm} \P\big\{\big| |\phi_{t+2^{-j},1}(y)|\vee\bar R-|\phi_{t+2^{-j},1}(z)|\vee \bar R\big| \ge \frac u2\big\}\\
&\le  2\sup_{|x|\le \bar R + \delta}\P\big\{ (|\phi_{t+2^{-j},1}(x)|-\bar R)^+ \ge \frac u2\big\}.\\ 
\end{split}
\end{align}
We use \eqref{erste} for $j \ge T$ and \eqref{zweite} for $j \le T$ and assume that $T \ge 1$ is so large that $(j \log \frac 32)/4 \ge 
-\log(\frac{c}{j^2} \frac{\ve}{4 \ee^{\lambda}})$ holds for all $j\ge T$. Applying Proposition \ref{coro}, 
we obtain, for $T$ sufficiently large
\begin{align*}
&\hspace{-.5cm}  \sum_{j=1}^\infty 2^{j-1} N  
\sup_{0 \le t \le 1-2^{-j}}\sup_{|y|=\bar R, |y-z|\le \delta}\P\big\{\big| |\phi_{t+2^{-j},1}(y)|\vee \bar R-|\phi_{t+2^{-j},1}(z)|\vee \bar R\big| 
\ge \frac c{j^2} \frac \ve {4}T^{1/\alpha}\big\}\\
&\le  c_d \bar R^{d-1} \ee^{T^\nu(d-1)}
\Big( \sum_{j=1}^{\lfloor T \rfloor}  2^{jd} \sup_{0 \le t \le 1-2^{-j} } \sup_{|x|\le\bar R +\delta}\P\big\{ |\phi_{t+2^{-j},1}(x)| \ge \bar R +
\frac c {j^2} \frac \ve 4 T^{1/\alpha} \big\}       \\
&\hspace{.5cm}+ \sum_{j=\lceil T \rceil}^\infty 2^{jd-1} 
\exp\Big\{-\frac 1 {2\sigma_L^2}\Big( \Big(\log \frac {c\ve}{4j^2\ee^{\lambda}} +\frac{j}{2} \log \frac 32 +\Big(\frac 1 \alpha - 
\frac \kappa 2\Big) \log T\Big)^+\Big)^2\Big\}\Big)\\
&\le  c_d \bar R^{d-1} \ee^{T^\nu(d-1)} \Big(3 \sum_{j=1}^{\lfloor T \rfloor} 2^{jd} \exp\Big\{ -\frac{1}{8\sigma_B^2}
\Big(\Big( \frac{c\ve}{4j^2}T^{1/\alpha}-\delta\Big)^+\Big)^2\Big\}\\
&\hspace{.5cm}+   \sum_{j=\lceil T \rceil}^\infty 2^{jd-1}  \exp\Big\{-\frac 1 {2\sigma_L^2}\frac {j^2}{16} (\log \frac 32)^2\Big\}\Big)\\
&\le  c_d \bar R^{d-1} \ee^{T^\nu(d-1)}\Big( 3 \sum_{j=1}^{\lfloor T \rfloor} 2^{jd} \exp\Big\{ -\frac{1}{8\sigma_B^2}
\Big( \frac{c\ve}{8T^2}T^{1/\alpha}\Big)^2\Big\}            \\
&\hspace{.5cm}+  \sum_{j=\lceil T \rceil}^\infty 2^{jd-1}  \exp\Big\{-\frac 1 {2\sigma_L^2}\frac {jT}{16} (\log \frac 32)^2\Big\}\Big).\\ 
\end{align*}
Evaluating the geometric series and estimating the sum by $T$ times the largest (namely the last) summand, we see that the 
whole expression decays superexponentially in $T$.

Finally, we estimate the first term in \eqref{4star}. Applying Theorem \ref{ball}a) with $q=d+1$ and Lemma \ref{two}, we get
\begin{align*}
&\hspace{-.5cm}  \sum_{j=1}^\infty 2^{j-1} N \P\big\{ \mathrm{diam}\, \phi_{t1}(M_i) \ge \frac c{j^2} \frac \ve 4 T^{1/\alpha}\big\}\\
&\le h_d \bar R^{d-1}\ee^{T^\nu(d-1)} \ee^{-T^\nu(d+1)} \ee^{(\lambda + \frac 12 (d+1)\sigma_L^2)(d+1)T}  
T^{-(d+1)/\alpha} \sum_{j=1}^\infty j^{2(d+1)} 2^{-j(d+1)}  2^{jd-1},  
\end{align*}
which decays to zero 
superexponentially as $T \to \infty$ (here $h_d$ depends on the parameters of the SDE and on $\ve$ but not on $T$).
Therefore, \eqref{toshow} follows and the proof of Theorem \ref{main} is complete. \hfill $\Box$

\section{Appendix}
To prove Theorem \ref{main}, we need the following result. Part b) of the following theorem is also contained in
\cite{S08}. We provide its proof for the reader's convenience (and because it is short). 
 
\begin{theo}\label{ball}
Let $(t,x) \mapsto \phi_t(x)$ be a continuous random field, $(t,x) \in [0,\infty) \times \R^d$
taking values in a separable complete metric space $(E,\rho)$. Assume that there exist numbers $\Lambda \ge0$, 
$\sigma>0$ and $\bar c>0$ such that for each $x,y \in \R^d$, $T>0$, and $q\ge 1$, we have
\begin{equation}\label{H}
\left(\E \sup_{0 \le t \le T} (\rho(\phi_t(x),\phi_t(y)))^q \right)^{1/q} \le \bar c \,|x-y| 
\exp \{(\Lambda + \frac{1}{2}q\sigma^2)T\}.
\end{equation}
\begin{itemize}
\item[a)]
For each cube ${\mathcal X}$ with side length $\xi$, $T>0$, $u>0$, and $\kappa \in (0,1-d/q)$ we have
$$
\P\big\{\sup_{x,y \in {\mathcal X}} \sup_{0 \le t \le T} \rho(\phi_t(x),\phi_t(y))\ge u\big\} \le 
\Big(\frac{2d}{1-2^{-\kappa}}\Big)^q \frac{\bar c^qd2^{q\kappa-q+d}}{1-2^{q\kappa-q+d}} \exp\{(\Lambda + \frac 12 q \sigma^2)qT\}
\xi^q u^{-q}.
$$
\item[b)] 
For $\gamma>0$, define
\begin{eqnarray*}
I(\gamma) :=   \left\{ 
\begin{array}{ll}
\frac{(\gamma-\Lambda)^2}{2\sigma ^2} &{\rm if } \, \gamma \ge \Lambda + \sigma^2 d\\
d (\gamma-\Lambda - \frac{1}{2}\sigma^2 d)  &{\rm if } \,\Lambda + \frac{1}{2}\sigma^2 d \le \gamma \le \Lambda + \sigma^2 d\\
0 &{\rm if } \,\gamma \le \Lambda + \frac{1}{2}\sigma^2 d.
\end{array}
 \right.
\end{eqnarray*}
Then, for each $u>0$, we have
$$
\limsup_{T \to \infty} \frac{1}{T} \sup_{{\mathcal X}_T} \log \P\{\sup_{x,y \in {\mathcal X}_T} 
\sup_{0 \le t \le T} \rho(\phi_t(x),\phi_t(y)) \ge u\} \le -I(\gamma),
$$
where $\sup_{{\mathcal X}_T}$ means that we take the supremum over all cubes ${\mathcal X}_T$ in $\R^d$ 
with side length $\exp\{-\gamma T\}$.
\end{itemize}
\end{theo}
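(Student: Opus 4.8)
The plan is to establish part (a) by a dyadic chaining argument of Kolmogorov--Chentsov type and then to read off part (b) by optimizing the bound from (a) over the moment exponent $q$.

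For part (a), after translating so that $\mathcal X=[0,\xi]^d$, I would work with the dyadic grids $D_n:=\{\xi 2^{-n}k:k\in\{0,\dots,2^n\}^d\}$, whose union $D:=\bigcup_{n\ge0}D_n$ is dense in $\mathcal X$. Call two points of $D_n$ \emph{neighbors} if they differ in a single coordinate by $\xi 2^{-n}$; the number of neighbor pairs in $D_n$ is at most $c'2^{nd}$ with $c'$ depending only on $d$. Put $K_n:=\sup\{\sup_{0\le t\le T}\rho(\phi_t(x),\phi_t(y)):x,y\in D_n\text{ neighbors}\}$. Applying \eqref{H} with $|x-y|=\xi 2^{-n}$, Markov's inequality and a union bound over neighbor pairs give, for every $a>0$,
\[
\P\{K_n\ge a\}\le c'\,2^{nd}\,a^{-q}\,\bar c^{\,q}(\xi 2^{-n})^q\exp\{(\Lambda+\tfrac12 q\sigma^2)qT\}.
\]
Telescoping along the dyadic approximants $x^{(0)},x^{(1)},\dots$ of a point $x\in D$ (successive approximants being joined by at most $d$ neighbor edges of the finer grid, and the $2^d$ corners $D_0$ being linked by at most $d$ edges of $D_0$) yields $\sup_{0\le t\le T}\rho(\phi_t(x),\phi_t(x^{(0)}))\le d\sum_{n\ge0}K_n$, hence, using continuity of $\phi$ and density of $D$,
\[
\sup_{x,y\in\mathcal X}\sup_{0\le t\le T}\rho(\phi_t(x),\phi_t(y))\le 2d\sum_{n\ge0}K_n .
\]
Choosing thresholds $a_n:=\tfrac{u}{2d}(1-2^{-\kappa})2^{-\kappa n}$ so that $2d\sum_n a_n=u$, summing the tail bounds above over $n$, and noting that the resulting geometric series converges exactly because $\kappa<1-d/q$ (equivalently $q\kappa-q+d<0$), one obtains a bound of the asserted shape $\big(\tfrac{2d}{1-2^{-\kappa}}\big)^q\tfrac{\bar c^{\,q}d\,2^{q\kappa-q+d}}{1-2^{q\kappa-q+d}}\exp\{(\Lambda+\tfrac12 q\sigma^2)qT\}\xi^q u^{-q}$; pinning down the constant in exactly this form is a matter of careful bookkeeping of the neighbor count and the starting index of the sum.

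For part (b) I would apply part (a) with $\xi=e^{-\gamma T}$ and $u$ fixed. Once $q$ and $\kappa$ are chosen, the multiplicative constant in (a) is independent of $T$, so taking logarithms, dividing by $T$, and letting $T\to\infty$ gives
\[
\limsup_{T\to\infty}\frac1T\,\sup_{\mathcal X_T}\log\P\Big\{\sup_{x,y\in\mathcal X_T}\sup_{0\le t\le T}\rho(\phi_t(x),\phi_t(y))\ge u\Big\}\le(\Lambda+\tfrac12 q\sigma^2)q-\gamma q=\tfrac12\sigma^2q^2-(\gamma-\Lambda)q
\]
for every admissible $q$; here $\kappa$ has disappeared in the limit, and admissibility of $\kappa\in(0,1-d/q)$ only requires $q>d$. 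Taking the infimum over $q\in(d,\infty)$ (approaching $q=d$ is harmless since only the $T$-independent constant degenerates), the unconstrained minimizer of $q\mapsto\tfrac12\sigma^2q^2-(\gamma-\Lambda)q$ is $q^\star=(\gamma-\Lambda)/\sigma^2$, with value $-\tfrac{(\gamma-\Lambda)^2}{2\sigma^2}$; this lies in $(d,\infty)$ precisely when $\gamma\ge\Lambda+\sigma^2 d$, giving the first branch of $I$. When $\gamma<\Lambda+\sigma^2 d$ the quadratic is increasing on $[d,\infty)$, so the infimum is its boundary value $\tfrac12\sigma^2d^2-(\gamma-\Lambda)d=-d(\gamma-\Lambda-\tfrac12\sigma^2 d)$, which is the second branch when $\gamma>\Lambda+\tfrac12\sigma^2 d$ and is dominated by the trivial estimate $\limsup\le 0$ (giving $I(\gamma)=0$) when $\gamma\le\Lambda+\tfrac12\sigma^2 d$.

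I expect the only real obstacle to sit inside part (a): arranging the chaining so that the multiplicative constant emerges in exactly the stated form, which forces care about how many neighbor pairs each dyadic level contributes and about the treatment of the coarsest level $D_0$, and making sure the supremum over the cube may legitimately be replaced by the supremum over the countable dense set $D$ --- this is where separability and completeness of $(E,\rho)$, together with pathwise continuity of $(t,x)\mapsto\phi_t(x)$, are used. Part (b) is then a routine one-variable optimization producing the three regimes of $I(\gamma)$.
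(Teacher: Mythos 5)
Your proposal is correct and follows essentially the same route as the paper: the paper also applies the functional Kolmogorov--Chentsov estimate \eqref{H} to the rescaled field $Z_x(t):=\phi_t(\xi x)$ and then optimizes the resulting bound over $q>d$ to extract the three regimes of $I(\gamma)$, with the boundary case $q\downarrow d$ and the trivial bound $\limsup\le 0$ handled exactly as you describe. The only difference is that the paper invokes a ready-made quantitative version of Kolmogorov's continuity theorem (Lemma~\ref{Kolmo}, proved in \cite{S08}) to get the explicit constant, whereas you redo the dyadic chaining from scratch; the content is the same.
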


The proof of Theorem \ref{ball} relies on the following (quantitative) version of Kolmogorov's continuity 
theorem which is proved in  \cite{S08}.

\begin{lemma}\label{Kolmo}
Let $\Theta=[0,1]^d$ and assume that there exist $a,b,c >0$ such that, for all $x,y \in [0,1]^d$, we have
$$
{\mathbf E} \left((\hat \rho(Z_x,Z_y))^a\right) \le c |x-y|_1^{d+b}.
$$
Then $Z$ has a continuous modification (which we denote by the same symbol). For each $\kappa \in (0,b/a)$, there exists a 
random variable $S$ such that ${\mathbf E}(S^a) \le \frac{cd 2^{a\kappa-b}}{1- 2^{a\kappa-b}}$ and
$$
\sup \left\{ \hat \rho(Z_x(\omega),Z_y(\omega)): x,y \in [0,1]^d,|x-y|_{\infty} \le r \right\} \le 
\frac{2d}{1-2^{-\kappa}} S(\omega) r^{\kappa}
$$
for each $r \in [0,1]$.
In particular, for all $u > 0$, we have
\begin{equation}\label{estikolmo}
{\mathbf P}\left\{ \sup_{x,y \in [0,1]^d} \hat \rho(Z_x,Z_y) \ge u \right\} \le 
\left( \frac{2d}{1-2^{-\kappa}}\right)^a \frac{cd 2^{a\kappa-b}}{1- 2^{a\kappa-b}}u^{-a}.
\end{equation}
\end{lemma}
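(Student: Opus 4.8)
The plan is to run the classical dyadic chaining argument underlying the Kolmogorov--Chentsov theorem, carrying the constants along explicitly. Throughout, $|\cdot|_1$ and $|\cdot|_\infty$ denote the $\ell^1$- and $\ell^\infty$-norms on $\R^d$. For $n\ge 0$ let $D_n:=\{k2^{-n}:k\in\Z\}^d\cap[0,1]^d$ be the dyadic grid of mesh $2^{-n}$ and $D:=\bigcup_{n\ge 0}D_n$. Call an unordered pair $\{u,v\}\subseteq D_n$ an \emph{elementary $n$-edge} if $v-u=\pm 2^{-n}e_i$ for some coordinate $i$, so that $|u-v|_1=2^{-n}$; parametrising such a pair by its lower endpoint and the direction $i$ shows there are at most $d\,2^{nd}$ of them. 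Put
$$
S_n:=\max\{\hat\rho(Z_u,Z_v):\{u,v\}\ \text{an elementary}\ n\text{-edge}\},\qquad S:=\sup_{n\ge 1}2^{n\kappa}S_n .
$$
This $S$ (which depends on the chosen $\kappa$) is the random variable to be produced.

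First I would estimate its $a$-th moment. Since $S^a=\sup_{n\ge 1}2^{na\kappa}S_n^a\le\sum_{n\ge 1}2^{na\kappa}S_n^a\le\sum_{n\ge 1}2^{na\kappa}\sum_{\{u,v\}}\hat\rho(Z_u,Z_v)^a$, taking expectations and using the hypothesis $\E[\hat\rho(Z_u,Z_v)^a]\le c\,|u-v|_1^{d+b}=c\,2^{-n(d+b)}$ together with the edge count gives
$$
\E[S^a]\le\sum_{n\ge 1}2^{na\kappa}\cdot d\,2^{nd}\cdot c\,2^{-n(d+b)}=cd\sum_{n\ge 1}2^{n(a\kappa-b)}=\frac{cd\,2^{a\kappa-b}}{1-2^{a\kappa-b}},
$$
the geometric series converging precisely because $\kappa<b/a$. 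In particular $S<\infty$ almost surely.

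Next comes the chaining bound. For $x\in D$ let $x^{(n)}\in D_n$ be the coordinatewise truncation of $x$ to mesh $2^{-n}$; then $x^{(n)}\to x$ and $x^{(n+1)}$ differs from $x^{(n)}$ by $0$ or $2^{-(n+1)}$ in each coordinate, so the axis-parallel dyadic path joining them uses at most $d$ elementary $(n+1)$-edges, whence $\hat\rho(Z_{x^{(n)}},Z_{x^{(n+1)}})\le d\,S_{n+1}\le dS\,2^{-(n+1)\kappa}$ by the triangle inequality for $\hat\rho$. Given $x,y\in D$ with $|x-y|_\infty\le r\in(0,1]$, pick $m$ with $2^{-m}$ comparable to $r$, telescope $x$ down to $x^{(m)}$ and $y$ down to $y^{(m)}$, and join $x^{(m)}$ to $y^{(m)}$ by a bounded number of elementary $m$-edges; summing the two geometric tails $\sum_{n\ge m}dS2^{-(n+1)\kappa}$ and collecting the constants yields $\hat\rho(Z_x,Z_y)\le\frac{2d}{1-2^{-\kappa}}S\,r^{\kappa}$ for all dyadic $x,y$. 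Consequently, on the almost sure event $\{S<\infty\}$ the restriction $D\ni x\mapsto Z_x$ is uniformly (indeed H\"older-$\kappa$) continuous, hence extends uniquely to a continuous field $\tilde Z$ on $[0,1]^d$; the moment hypothesis forces $Z_{x^{(n)}}\to Z_x$ in $L^a$, so $\tilde Z_x=Z_x$ a.s.\ for each $x$, i.e.\ $\tilde Z$ is the asserted continuous modification, and the displayed H\"older bound passes to all $x,y\in[0,1]^d$ by continuity. Finally, \eqref{estikolmo} is obtained by taking $r=1$ (the $\ell^\infty$-diameter of $[0,1]^d$) in that bound, which gives $\sup_{x,y}\hat\rho(Z_x,Z_y)\le\frac{2d}{1-2^{-\kappa}}S$, and then applying Markov's inequality to $S$ with exponent $a$, using the moment bound from the previous step.

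I do not expect a genuine conceptual obstacle here: this is the classical argument. The only delicate point is the bookkeeping needed to make the constants $\frac{2d}{1-2^{-\kappa}}$ and $\frac{cd\,2^{a\kappa-b}}{1-2^{a\kappa-b}}$ come out \emph{exactly} as stated --- namely, counting the elementary $n$-edges on the half-open cube (handling the boundary by density and continuity) so that the count is $d\,2^{nd}$ with no stray factor, choosing the base level $m$ in the chaining so that $2^{-m\kappa}\le r^{\kappa}$ while the join between $x^{(m)}$ and $y^{(m)}$ is absorbed by the tail already accounted for, and checking that the two endpoint chains contribute exactly the factor $2$. One could instead invoke the Garsia--Rodemich--Rumsey inequality, but the direct chaining is more transparent and is what yields the explicit constants needed in Sections \ref{Mainresult}--\ref{Proofs}. \hfill$\Box$
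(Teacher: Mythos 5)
Your argument is correct and is exactly the standard dyadic chaining proof of the quantitative Kolmogorov--Chentsov theorem that the paper has in mind: the paper itself does not prove Lemma \ref{Kolmo} but defers to \cite{S08}, where the same construction (levelwise maxima over elementary dyadic edges, the weighted supremum $S=\sup_n 2^{n\kappa}S_n$, and the two-tails-plus-middle-join chaining) is carried out, and your bookkeeping — the half-open-cube edge count $d\,2^{nd}$, the base level $2^{-m}\le r<2^{-m+1}$, and the identity $1+\tfrac{2^{-\kappa}}{1-2^{-\kappa}}=\tfrac{1}{1-2^{-\kappa}}$ — does reproduce the stated constants exactly. No gap.
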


\noindent {\bf Proof of Theorem \ref{ball}.} 
Let $T>0$. 
Without loss of generality, we assume that ${\mathcal X}:={\mathcal X}_T=[0,\xi]^d$. 
Define $Z_x(t):=\phi_t(\xi x)$, $x \in {\mathbf R}^d$. 
For $q \ge 1$, \eqref{H} implies
$$
\left({\mathbf E} \sup_{0 \le t \le T} \rho(Z_x(t),Z_y(t))^q\right)^{1/q} \le \bar c \xi |x-y|  
{\mathrm e}^{(\Lambda+\frac{1}{2}q\sigma^2)T},
$$ 
i.e.~ the assumptions of Lemma \ref{Kolmo} are satisfied with $a=q$, $c=\bar c^q \exp\{(\Lambda + \frac{1}{2} q \sigma^2)qT\}\xi^q$ 
and $b=q-d$ for any $q>d$. Therefore we get for $\kappa \in (0,b/a)$:
$$
\P\big\{\sup_{x,y \in {\mathcal X}} \sup_{0 \le t \le T} \rho(\phi_t(x),\phi_t(y))\ge u\big\} \le 
\Big(\frac{2d}{1-2^{-\kappa}}\Big)^q \frac{\bar c^qd2^{q\kappa-q+d}}{1-2^{q\kappa-q+d}} \exp\{(\Lambda + \frac 12 q \sigma^2)qT\}
\xi^q u^{-q},
$$
so part a) follows. Inserting $\xi=\ee^{-\gamma T}$, taking logs, dividing by $T$, letting $T \to \infty$ 
and optimizing over $q>d$ yields part b) of Theorem \ref{ball}. \hfill $\Box$\\

\noindent {\bf Remark.} If, in addition to the assumptions in Theorem  \ref{ball}, the map $x \mapsto \phi_t(x)$ is one-to-one 
for all $t$ and $\omega$, then part b) of Theorem   \ref{ball} holds with $d-1$ replaced by $d$ in the definition of $I(\gamma)$ 
since we can apply Lemma \ref{Kolmo} to each of the faces of ${\mathcal X}_T$ and the supremum over 
$x,y \in {\mathcal {X}}_T$ is attained for $x,y$ on the boundary of ${\mathcal X}_T$.\\ 

The following lemma is almost identical to Lemma 4.1 in \cite{S08} 
and Lemma 5.1 in \cite{CSS00}. We provide its proof, since our assumption (A1) is slightly weaker (in some respect) 
than in those references.

\begin{lemma}\label{two}
Let $\mathrm{(A1)}$ be satisfied. 
Then, for each $x,y \in \R^d$, there exists a Wiener process $W$, such that 
\begin{equation}
\sup_{0 \le t \le T}|\phi_t(x)-\phi_t(y)|\le \ee^{\sigma_L W_T^* + \lambda T}
\end{equation}
for all $T>0$. 
In particular, the assumptions of Theorem \ref{ball} hold with $E=\R^d$, $\rho = |.|$, $\sigma=\sigma_L$, $\Lambda=\lambda$ 
and $\bar c =2$.
\end{lemma}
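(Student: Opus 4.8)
The plan is to control the squared distance $V_t:=|\phi_t(x)-\phi_t(y)|^2$ through the real semimartingale $\log V_t$, whose drift will turn out to be bounded by $2\lambda$ and whose martingale part is a Brownian motion run at speed at most $4\sigma_L^2$. First one disposes of the trivial case $x=y$. For $x\neq y$ one needs $\phi_t(x)\neq\phi_t(y)$ for all $t$, so that $V_t>0$ and $\log V_t$ is a genuine continuous semimartingale; this non‑coalescence is standard --- it follows from pathwise uniqueness for \eqref{gen_sde}, or from a lower‑bound version of the computation below together with the local Lipschitz property of $b$ and a localization in the space variable --- so I would simply state it. Write $u_t:=\phi_t(x)-\phi_t(y)$, let $M^{x,y}$ be the local martingale part of $u$, and set $\A_t:=\A(\phi_t(x),\phi_t(y))$; by the defining property of $\A$ one has $\dd\langle M^{x,y}\rangle_t=\A_t\,\dd t$, and $\A_t$ is symmetric positive semidefinite. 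It\^o's formula applied to $V_t=|u_t|^2$ and then to $\log V_t$ gives $\dd\log V_t=\mu_t\,\dd t+\dd\tilde N_t$, where
\[
\mu_t=\frac{2\,u_t\cdot\big(b(\phi_t(x))-b(\phi_t(y))\big)}{|u_t|^2}+\frac{1}{|u_t|^2}\Big(\mathrm{Tr}\,\A_t-\frac{2\,u_t^{T}\A_t u_t}{|u_t|^2}\Big),\qquad \dd\langle\tilde N\rangle_t=\frac{4\,u_t^{T}\A_t u_t}{|u_t|^4}\,\dd t .
\]

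The quantitative heart --- and the step I expect to be the main obstacle --- is to show $\mu_t\leq 2\lambda$, which has to exploit the exact coefficient $\tfrac{d-1}{2}$ in (A1)(ii). The first summand of $\mu_t$ is $\leq 2\lambda-(d-1)\sigma_L^2$ by (A1)(ii). For the second I would prove the elementary fact that for every symmetric positive semidefinite $d\times d$ matrix $A$ and every unit vector $v$,
\[
\mathrm{Tr}\,A-2\,v^{T}Av\ \leq\ (d-1)\,\|A\| .
\]
Indeed, diagonalizing $A=\sum_i\mu_i v_iv_i^{T}$ in an orthonormal eigenbasis and setting $c_i:=(v_i\cdot v)^2\geq 0$ (so $\sum_i c_i=1$), the left‑hand side equals $\sum_i\mu_i(1-2c_i)$, which is $\leq\|A\|\sum_i(1-2c_i)^+$ since $0\leq\mu_i\leq\|A\|$; and $c\mapsto\sum_i(1-2c_i)^+$ is convex on the probability simplex, hence maximized at a vertex, where its value is $d-1$. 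Applying this with $A=\A_t$, $v=u_t/|u_t|$ and invoking (A1)(i), $\|\A_t\|\leq\sigma_L^2|u_t|^2$, bounds the second summand of $\mu_t$ by $(d-1)\sigma_L^2$, whence $\mu_t\leq 2\lambda$. The same two facts give $u_t^{T}\A_t u_t\leq\sigma_L^2|u_t|^4$, so $\langle\tilde N\rangle_t\leq 4\sigma_L^2 t$.

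It then remains to read off the statement. From $\log V_t\leq\log V_0+2\lambda t+\tilde N_t$ and the representation (possibly on an enriched probability space) of the continuous local martingale $\tilde N$ as $\tilde N_t=B_{\langle\tilde N\rangle_t}$ with $B$ a Brownian motion, the bound $\langle\tilde N\rangle_t\leq 4\sigma_L^2 t$ gives $\sup_{s\leq t}\tilde N_s\leq\sup_{0\leq r\leq 4\sigma_L^2 t}B_r=2\sigma_L W_t^*$, where $W_s:=(2\sigma_L)^{-1}B_{4\sigma_L^2 s}$ is a standard Brownian motion. Taking the supremum over $t\in[0,T]$ and exponentiating, with $V_0=|x-y|^2$, yields $\sup_{0\leq t\leq T}|\phi_t(x)-\phi_t(y)|\leq |x-y|\,\ee^{\sigma_L W_T^*+\lambda T}$, the asserted inequality (the process $W$ depending on $x$ and $y$). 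Finally, since $W_T^*$ has the law of $|W_T|$, one has $\E\,\ee^{q\sigma_L W_T^*}\leq 2\,\ee^{q^2\sigma_L^2 T/2}$; raising the last display to the power $q\geq 1$, taking expectations and then $q$‑th roots, and using $2^{1/q}\leq 2$, we obtain $\big(\E\sup_{0\leq t\leq T}|\phi_t(x)-\phi_t(y)|^q\big)^{1/q}\leq 2\,|x-y|\,\ee^{(\lambda+\frac12 q\sigma_L^2)T}$, i.e.\ hypothesis \eqref{H} of Theorem \ref{ball} holds with $\Lambda=\lambda$, $\sigma=\sigma_L$ and $\bar c=2$.
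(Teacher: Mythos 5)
Your proof is correct and follows essentially the same route as the paper: apply It\^o's formula to $\log|\phi_t(x)-\phi_t(y)|$ (the paper writes $\tfrac12\log|D_t|^2$, you write $\log V_t$, a trivial reparametrization), bound the drift by $\lambda$ and the quadratic variation density by $\sigma_L^2$ using (A1), invoke the Dambis--Dubins--Schwarz representation with the time change dominated by the identity, and exponentiate. The one place where you go beyond what the paper writes is the elementary estimate $\mathrm{Tr}\,A-2v^{T}Av\le(d-1)\|A\|$ for PSD $A$ and unit $v$, which the paper leaves implicit in the assertion ``$|\alpha(s,\omega)|\le\lambda$''; your argument supplies exactly the computation that makes the constant $\tfrac{d-1}{2}$ in (A1)(ii) come out right.
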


\noindent {\bf Proof.} Fix $x,\,y \in \R^d$, $x \neq y$ and define

$$D_{t} := \phi_t(x) - \phi_t(y),\;\;Z_{t} := \frac{1}{2} \log(|D_t|^2). $$
Therefore, $Z_t = f (D_t)$ where $f(z) := \frac{1}{2}\log(|z|^2)$. Note that $D_t \neq 0$ for all $t\ge0$ by the one-to-one
property. Using It\^{o}'s formula, we get
\begin{eqnarray*}
   {\mathrm d}  Z_t &=& \frac{D_{t} \cdot \left ( M({\mathrm d}  t, 
  \phi_t (x)) - M({\mathrm d} t,\phi_t (y)) \right )}{| D_{t}|^2 } + 
  \frac{D_t \cdot \left ( b ( \phi_t(x)) - b(\phi_t (y)) \right )}{| D_{t}|^2}  \,{\mathrm d}  t\\
 & & + \frac{1}{2} \frac{1}{|D_{t}|^2} {\mbox {Tr}} \left ( 
  {\cal A} (\phi_t(x), \phi_t(y)) \right )  {\mathrm d}  t - \sum_{i,j} \frac{D^i _{t} D^j _{t}}{(|D_{t}|^2)^2} 
  {\cal A}_{i,j} (\phi_t(x), \phi_t(y))  \,{\mathrm d}  t.
\end{eqnarray*}
We define the local martingale $N_{t}, t \ge 
0$ by
$$
N_t = \int^t_{0}\frac{D_{s}}{|D_{s}|^2} \cdot \left ( M({\mathrm d}  s, \phi_s(x)) - M({\mathrm d}  s, 
\phi_s(y))\right )
$$
and obtain 
$$
Z_t = Z_{0} + N_t + \int^t_{0} \alpha (s,\omega)  \,{\mathrm d}  s,  
$$
where
$$
\sup_{x,y}\sup_s {\mathrm {esssup}}_{\omega}  
|\alpha (s,\omega)|  \le \lambda
$$ 
and
\begin{equation}\label{kappa}
{\mathrm d}  \langle N \rangle_{t} = \sum_{i,j} 
\frac{D^i_{t}D^j_{t}}{(|D_{t}|^2)^2}{\cal{A}}_{i,j}(\phi_t (x), \phi_t (y))  \,{\mathrm d}  t \le \sigma_L^2   \,{\mathrm d} t. 
\end{equation}
Since $N$ is a continuous local martingale with 
$N_{0} = 0$, there exists a standard Brownian motion $W$ (possibly on an enlarged probability space) 
such that $N_t=\sigma_L W_{\tau(t)}$, $t \ge 0$ and (\ref{kappa}) 
implies $\tau(t) \le t$ for all $t \ge 0$. Hence
\begin{equation}
Z_{t} \le \log |x-y| + \sigma_L \; \sup_{0 \le s \le t} W_{s} + \lambda t. 
\end{equation}
Exponentiating the last inequality completes the proof of the lemma. \hfill $\Box$\\

In the proof of part a) of Theorem \ref{main} we need the following {\em one-sided} Chaining Lemma (without absolute values). 

\begin{lemma}\label{chain}
 Let $T>0$ and let $X_t,\,0 \le t \le T$ be a real-valued process with right-continuous paths. Let $\varepsilon_j,\,j \in \N$ 
be positive and satisfy $\sum_{j=1}^{\infty} \varepsilon_j =1$. Then
$$
\P \big\{ \sup_{0 \le t \le T} X_t - X_T \ge u\big\} \le \sum_{j=1}^{\infty}  2^{j-1} 
\sup_{0 \le s < t \le T,t-s = 2^{-j} T} \P \big\{  X_s - X_t \ge \varepsilon_j u\big\},\,u \ge 0. 
$$
\end{lemma}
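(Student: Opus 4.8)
The plan is a dyadic chaining argument. By rescaling time we may assume $T=1$, and throughout we fix $u>0$. Since $X$ has right-continuous paths, $\sup_{0\le t\le 1}X_t=\lim_{n\to\infty}\max_{1\le k\le 2^n}X_{k2^{-n}}$ (the time $t=0$ may be discarded, again by right-continuity), so it suffices to bound $\P\{\max_{1\le k\le 2^n}(X_{k2^{-n}}-X_1)\ge u\}$ by the right-hand side of the asserted inequality uniformly in $n$, and then let $n\to\infty$. The passage to the limit is harmless: the bound obtained for each finite grid is a countable sum of functions of the form $v\mapsto\P\{Y\ge v\}$, each of which is nonincreasing and left-continuous, so the sum is left-continuous, and one concludes via $\P\{\sup_{0\le t\le 1}(X_t-X_1)\ge u\}=\lim_{u'\uparrow u}\P\{\sup_{0\le t\le 1}(X_t-X_1)>u'\}$.

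For a dyadic $t\in(0,1]$ and an integer $j\ge 0$, let $\pi_j(t):=\lceil 2^j t\rceil\,2^{-j}$ be the smallest integer multiple of $2^{-j}$ that is $\ge t$. Then $\pi_0(t)=1$, one has $\pi_j(t)=t$ once $j$ exceeds the dyadic level of $t$, and $\pi_0(t)\ge\pi_1(t)\ge\cdots$ because each $\pi_{j-1}(t)$ is in particular a multiple of $2^{-j}$ lying above $t$. Furthermore $\pi_{j-1}(t)-\pi_j(t)\in\{0,2^{-j}\}$: the difference is a nonnegative multiple of $2^{-j}$, and it is strictly less than $2^{-(j-1)}$ since $\pi_{j-1}(t)<t+2^{-(j-1)}$ while $\pi_j(t)\ge t$. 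Telescoping the resulting finite sum yields, for every dyadic $t\in(0,1]$,
\[
X_t-X_1=\sum_{j\ge 1}\big(X_{\pi_j(t)}-X_{\pi_{j-1}(t)}\big).
\]

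The next step is a pigeonhole. If $X_t-X_1\ge u$, then, as $\sum_{j\ge1}\varepsilon_j u=u$ with all $\varepsilon_j>0$, some index $j$ satisfies $X_{\pi_j(t)}-X_{\pi_{j-1}(t)}\ge\varepsilon_j u>0$; since $\varepsilon_j u>0$ this forces $\pi_{j-1}(t)\ne\pi_j(t)$, hence $\pi_{j-1}(t)=\pi_j(t)+2^{-j}$. The decisive observation is that $\pi_{j-1}(t)$, being a multiple of $2^{-(j-1)}$ in $(0,1]$, equals $2l\cdot 2^{-j}$ for some $l\in\{1,\dots,2^{j-1}\}$; hence the only pairs $(\pi_j(t),\pi_{j-1}(t))$ that can arise at level $j$ are the $2^{j-1}$ pairs $\big((2l-1)2^{-j},\,2l\cdot2^{-j}\big)$, $1\le l\le 2^{j-1}$ — not all $2^{j}$ adjacent dyadic pairs of mesh $2^{-j}$. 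Consequently, for every dyadic $t\in(0,1]$,
\[
\{X_t-X_1\ge u\}\ \subseteq\ \bigcup_{j\ge1}\ \bigcup_{l=1}^{2^{j-1}}\big\{X_{(2l-1)2^{-j}}-X_{2l\cdot2^{-j}}\ge\varepsilon_j u\big\}.
\]
Taking the union over the finitely many dyadic points of the grid $\{k2^{-n}:1\le k\le 2^n\}$, applying the union bound, estimating the $j$-th inner sum of $2^{j-1}$ probabilities by $2^{j-1}$ times the largest, and then replacing that largest term by the supremum over all admissible pairs $0\le s<t\le 1$ with $t-s=2^{-j}$, gives the claimed bound uniformly in $n$. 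Letting $n\to\infty$ and undoing the rescaling finishes the argument.

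Apart from the telescoping identity and the (routine) measure-theoretic reduction to finite grids, the one step that requires genuine thought — and the source of the prefactor $2^{j-1}$ rather than $2^{j}$ — is the counting observation above: a jump at level $j$ in the chain always terminates at a point of the coarser grid $2^{-(j-1)}\mathbb{Z}$, which halves the number of level-$j$ intervals that must be accounted for. I expect this to be the only delicate point.
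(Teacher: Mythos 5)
Your proof is correct and follows the same dyadic chaining argument as the paper: the same roundings (the paper's $\alpha(j,t)=\lceil 2^{j-1}t/T\rceil 2^{-j+1}T$ is your $\pi_{j-1}$ after rescaling to $T=1$), the same telescoping decomposition via right-continuity, the same pigeonhole using $\sum_j\varepsilon_j=1$, and the same counting that yields the prefactor $2^{j-1}$. You make explicit two points the paper leaves implicit — the observation that only the $2^{j-1}$ pairs $((2l-1)2^{-j},2l\cdot2^{-j})$ can arise at level $j$, and the reduction to finite dyadic grids together with the left-continuity of $v\mapsto\P\{Y\ge v\}$ needed to pass to the limit — but the route is essentially identical.
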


\begin{proof}
For $t \in (0,T]$ and $j \in \N$ define $\alpha(j,t):=\lceil 2^{j-1} t/T\rceil 2^{-j+1} T$. Then, by right-continuity, 
$$
X_t-X_T = \sum_{j=1}^{\infty} \big( X_{\alpha(j+1,t)} - X_{\alpha(j,t)} \big)
$$
for $t>0$, and therefore
$$
\P \big\{ \sup_{0 \le t \le T} X_t - X_T \ge u\big\} \le \sum_{j=1}^{\infty} 2^{j-1} \max_{k=1,...,2^{j-1}} 
\P \big\{ X_{(k-\frac 12)2^{-j+1} T} - X_{k2^{-j+1} T} \ge \varepsilon_j u \big\} 
$$
proving the lemma.
\end{proof}

\bibliography{biblio}
\bibliographystyle{abbrv}

\end{document}